\errorcontextlines10
 
 \long\def\drop#1{}
\documentclass[reqno]{article}
\pdfoutput=1
\usepackage{a4wide}
\usepackage{amsmath,amsfonts,amsthm,amssymb}
\usepackage{mathrsfs,eucal}
\usepackage[normalem]{ulem}
\usepackage{geompsfi}
\usepackage{color}
\usepackage{enumerate}
\newcommand{\nnabla}{\nabla\kern-2pt}

\setlength{\topmargin}{-1cm}

\newtheorem{theorem}{Theorem}
\newtheorem{corollary}[theorem]{Corollary}
\newtheorem{lemma}[theorem]{Lemma}

\newtheoremstyle{remark}
{3pt}
{3pt}
{}
{}
{\bfseries}
{.}
{.5em}
{}

\theoremstyle{remark}
\newtheorem*{remark}{Remark}
\def\rme{{\mathrm e}}

\newcommand{\lambdaO}{{\lambda_\Omega}}

\newcommand{\restr}[1]{\lower3pt\hbox{$|_{#1}$}}


\newcommand{\GG}[1]{{\color{blue}{#1}}}

\newcommand{\Cb}{{\mathrm{C^0}}}
\renewcommand{\d}{{\mathrm d}}

\newcommand{\nablae}{{\nabla_{\kern-2pt\e\kern1pt}}}
\newcommand{\rhoe}{{\rho_\e}}
\newcommand{\ue}{{u_\e}}
\newcommand{\dive}{{\div\nolimits_{\kern-1pt\e}}}
\let\e\varepsilon
\newcommand{\xx}{{\mbox{\boldmath$x$}}}

\newcommand{\yy}{{\mbox{\boldmath$y$}}}

\newcommand{\Dom}{D}
\newcommand{\cDom}{{\overline D}}
\newcommand{\sDom}{D}
\def\R{{\mathbb R}}

\def\D{{\mathscr D}}
\def\T{\Omega}
\def\P{{\mathscr P}}
\def\M{{\mathscr M}}
\renewcommand{\a}{\mathfrak a}
\renewcommand{\b}{\mathfrak b}
\newcommand{\q}{\mathfrak q}

\let\weakto\rightharpoonup
\def\pref#1{(\ref{#1})}

\newcommand{\ds}{\displaystyle}
\def\div{\mathop{\mathrm{div}}}
\newcommand{\Ld}{\mathcal{L}^{d}}
\renewcommand{\L}{\mathcal{L}}

\newcommand{\Domain}{\mathsf{Dom}}

\begin{document}

\title{From diffusion to reaction via $\Gamma$-convergence \vspace{0.1cm}}
\author{Mark A. Peletier\footnote{Department of Mathematics and Institute for Complex Molecular Systems, Technische Universiteit Eindhoven, The Netherlands, m.a.peletier@tue.nl} \and Giuseppe Savar\'e\footnote{Dipartimento di Matematica F.Casorati, Universit\`a degli studi di Pavia, Italy, giuseppe.savare@unipv.it} \and Marco Veneroni\footnote{Fakult\"at f\"ur Mathematik, Technische Universit\"at Dortmund, Germany, marco.veneroni@math.uni-dortmund.de}}
\date{\today}
\maketitle

\abstract{We study the limit of {\em high activation energy} of a special Fokker-Planck equation, known as Kramers-Smoluchowski (K-S) equation. This equation governs the time evolution of the probability density of a particle performing a Brownian motion under the influence of a chemical potential $H/\e$. We choose $H$ having two wells corresponding to two chemical states $A$ and $B$. We prove that after a suitable rescaling the solution to (K-S) converges, in the limit of high activation energy ($\e\to0$), to the solution of a simple system modeling the diffusion of $A$ and $B$, and the reaction $A\rightleftharpoons B$. 

The aim of this paper is to give a rigorous proof of Kramer's formal derivation and 
to embed chemical reactions and diffusion processes in a common variational framework
which allows to derive the former as a singular limit of the latter, thus establishing 
a connection between two worlds often regarded as separate.

The singular limit is analysed by means of Gamma-convergence in the space of finite Borel measures endowed with the weak-$*$ topology.} 

\textbf{Key words and phrases:} unification, scale-bridging,
upscaling, high-energy limit, activation energy, {Dirichlet
  forms, Mosco-convergence, variational evolution equations}
\medskip

\textbf{AMS subject classification:} 35K57, 35Q84, (49J45, 49S05, 80A30)  
\section{Introduction}

\subsection{Chemical reaction as a diffusion process}
In a seminal paper in 1940, Hendrik Anthony Kramers described a number of approaches to the problem of calculating chemical reaction rates~\cite{Kramers40}. One of the limit cases in this paper is equivalent to the motion of a Brownian particle in a (chemical) potential landscape. In this description a reaction event is the escape of the particle from one energy well into another. 

This description is interesting for a number of reasons. It provides a connection between two processes, diffusion and reaction, which are often---especially at the macroscopic level---viewed as completely separate. It also provides a link between a macroscopic effect---chemical reaction---and a more microscopic, underlying motion, and in doing so, it highlights the fact that diffusion and reaction ultimately spring from the \emph{same} underlying motion. It finally also allows for explicit calculation of reaction rates in terms of properties of the energy landscape. 

In this paper we contribute to this discussion by studying the limit process of high activation energy in the unimolecular reaction 
$A\rightleftharpoons B$. As a first contribution, this provides a rigorous proof of the result that Kramers had derived formally. At the same time we extend his result to a Brownian motion in the product space spanned by both the chemical variable of Kramers and the variables corresponding to position in space, resulting in a limit system that models not only chemical reaction but also spatial diffusion---a simple reaction-diffusion system.
%

With this paper we have two aims. The first is to clarify the mathematical---rigorous---aspects of the formal results of~\cite{Kramers40}, and extend them to include spatial diffusion, and in this way to contribute to the upscaling of microscopic systems. The second is to
make a first step in the construction of a variational framework that
can describe the combination of general diffusive and chemically
reactive processes. 
From this point of view it would be interesting, for example, to place the limit system in the context of Wasserstein gradient flows (see also Section~\ref{sec:discussion}). Initiated by the work of Otto~\cite{JordanKinderlehrerOtto98,Otto01} and extended into many directions since, this framework provides an appealing variational structure for very general diffusion processes, but chemical reactions have so far resisted representation in the Wasserstein framework.

In this paper we only treat the simple equation
$A\rightleftharpoons B$, but we plan to extend the approach to other
systems in the future
(see also~\cite{MielkeInPrep}).

\subsection{The setup: enthalpy}

We consider the unimolecular reaction $A\rightleftharpoons B$. 
In chemical terms the $A$ and $B$ particles are two forms of the same molecule, such that the molecule can change from one form into the other. A typical example is a molecule with spatial asymmetry, which might exist in two distinct, mirror-image spatial configurations; another example is that of enzymes, for which the various spatial configurations also have different biological functions. 

\begin{remark}
Classical, continuum-level modelling of the system of $A$ and $B$ particles that diffuse and react (see e.g.~\cite{ErdiToth89,Aris99}) leads to the set of differential equations, where we write $A$ and $B$ for the concentrations of $A$ and $B$ particles:
\begin{subequations}
\label{pde:limit}
\begin{align}
\partial_t A - D\Delta A &= k(B-A)\\
\partial_t B - D\Delta B &= k(A-B).
\end{align}
\end{subequations}
(See Section~\ref{sec:discussion} for the equal reaction rates). 
This system will arise as the upscaling limit (see Theorem~\ref{th:1}) of the system that we now develop in detail. 
\end{remark}

We next assume that the observed forms $A$ and $B$ correspond to the wells of an appropriate energy function. Since it is common in the chemical literature to denote by `enthalpy difference' the release or uptake of heat as a particle $A$ is converted into a particle $B$, we shall adopt the same language and consider the $A$ and $B$ states to correspond to the wells of an enthalpy function $H$. 

While the domain of definition of $H$ should be high-dimensional, corresponding to the many degrees of freedom of the atoms of the molecule, we will here make the standard reduction to a one-dimensional dependence. The variable $\xi$ is assumed to parametrize an imaginary `optimal path' connecting the states $A$ to $B$, such that $\xi=-1$ corresponds to $A$ and $\xi=1$ to~$B$. Such a path should pass through the `mountain pass', the point which separates the basins of attraction of $A$ and $B$, and we arbitrarily choose that mountain pass to be at $\xi=0$, with $H(0)=1$. We also restrict $\xi$ to the interval $[-1,1]$, and we assume for simplicity that the wells are at equal depth, which we choose to be zero. 
A typical example of the function $H$ is showed in Figure~\ref{fig:H}.
\begin{figure}[ht]
\centering
\noindent
\psfig{figure=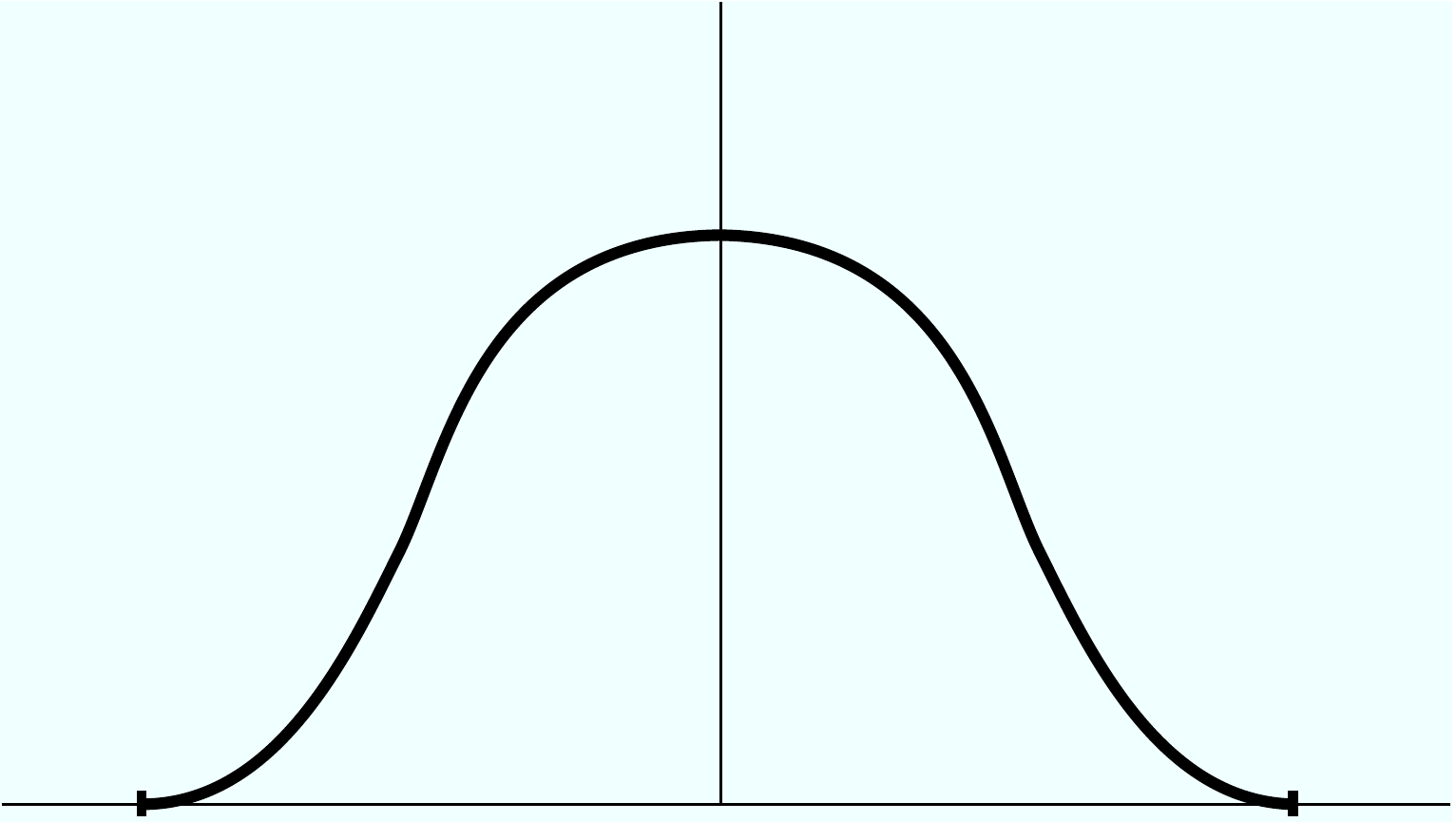,height=2cm}
\caption{A typical function $H$}
\label{fig:H}
\end{figure}

Specifically we make the following assumptions about $H$: $H\in C^\infty([-1,1])$, $H$ is even in $\xi$, maximal at $\xi=0$ with value $1$, and minimal at $\xi=\pm1$ with value $0$; $H(\xi)>0$ for any $-1<\xi<1$; $H'(\pm 1^\mp)=0$.  The assumption of equal depth for the two wells corresponds to an assumption about the rate constants of the two reactions; we comment on this in Section~\ref{sec:discussion}.

\subsection{Diffusion in the chemical landscape}

This newly introduced `chemical variable' $\xi$ should be interpreted as an internal degree of freedom of the particle, associated with internal changes in configuration. In the case of two alternative states of a molecule, $\xi$ parametrizes all the intermediate states along a connecting path.

In this view the total state of a particle consists of this chemical
state $\xi$ together with the spatial position of the particle,
represented by a $d$-dimensional spatial variable $x$
 in a Lipschitz, bounded, and open domain $\T\subset \R^d$,
so that the full state space for the particle is the closure $\cDom$
of 
\[
\Dom := \T\times (-1,1)
\qquad\text{with variables $(x,\xi)$}.
\]
Taking a probabilistic point of view, and following Kramers, the
motion of the particle will be described in terms of its probability
density $\rho\in \P(\cDom)$, in the sense that for Borel sets
$ X\subset \overline\T$ and $\Xi\subset [-1,1]$ the number
$\rho({X\times \Xi})$
is the probability of finding the particle at a position $x\in  X$ and with a `chemical state'
 $\xi\in \Xi$.

The particle is assumed to perform a Brownian motion
in $\Dom$, under the influence of the potential landscape described by $H$. 
This assumption corresponds to the `large-friction limit' discussed by Kramers.
The time evolution of the probability distribution $\rho$ then is
given by the
 Kramers-Smoluchowski
equation
\begin{equation}
\label{eq:pde-overline}
\partial_t \rho - \Delta_x \rho - 
  \tau\partial_\xi\bigl(\partial_{\xi}\rho + \rho\, \partial_\xi H\bigr) = 0
\qquad
\text{in $\D'(\Dom\times(0,\infty))$,}
\end{equation}
 with Neumann boundary conditions on the lateral boundary $\partial
  \Dom$.
The coefficient $\tau >0$ is introduced to parametrize the difference in scales for $x$ and $\xi$: since $x$ is a rescaled physical distance, and $\xi$ is a rescaled `chemical' distance, the units of length in the two variables are different, and the parameter $\tau$ can be interpreted as the factor that converts between the two scales. 
Below we shall make an explicit choice for $\tau$.

\subsection{The limit of high activation energy}

In the setup as described above, there is a continuum of states (i.e. $(-1,1)$) connecting the $A$ state to the $B$ state, and a statement of the type `the particle is in the $A$ state' is therefore not well defined. In order to make a connection with the macroscopic description `$A\rightleftharpoons B$', which presupposes a clear distinction between the two states, we take the limit of high activation energy, as follows.

We rescale the enthalpy $H$ with a small parameter $\e$, to make it $H(\xi)/\e$. (This is called `high activation energy' since $\max_\xi H(\xi)/\e = 1/\e$ is the height of the mountain that a particle has to climb in order to change states). 

This rescaling has various effects on the behaviour of solutions $\rho$ of~\pref{eq:pde-overline}. To illustrate one effect, let us consider the invariant measure $\gamma_\e$, the unique stationary solution in $\P(\cDom)$ of~\pref{eq:pde-overline}:
\begin{equation}
\gamma_\e =\lambdaO\otimes \tilde\gamma_\e,\quad
\lambdaO:=\frac1{\Ld(\T)}\Ld\restr\T,\quad
\tilde\gamma_\e =Z_\e^{-1} \rme^{-H/\e}\L^1\restr{[-1,1]}
\label{eq:28}
\end{equation}
(where $\L^1,\Ld$ are the $1$- and $d$-dimensional Lebesgue measures).
The constant $Z_\e$ is fixed by the requirement that $\gamma_\e(\cDom)=\tilde\gamma_\e([-1,1]) = 1$.

\begin{figure}[ht]
\centering
\noindent
\psfig{figure=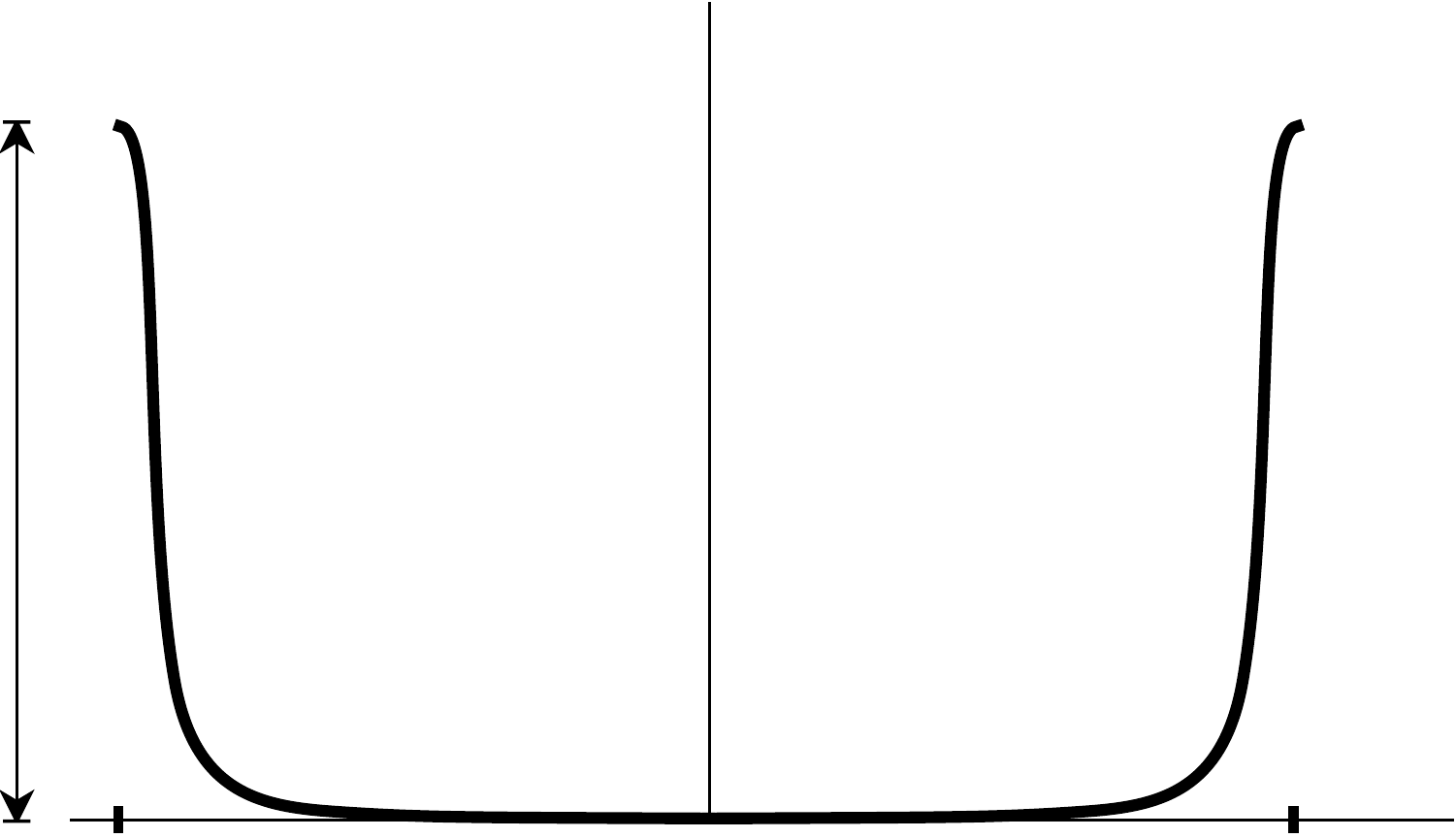,height=2.5cm}
\caption{The density $\tilde \gamma_\e$}
\label{fig:gamma-e}
\end{figure}

Since $H$ is strictly positive at any $-1<\xi<1$, the exponential
$\exp(-H(\xi)/\e)$ vanishes at all~$\xi$ except for $\xi=\pm 1$;
therefore the measure $\gamma_\e$ concentrates on the lines $\xi=-1$
and $\xi=1$, and
converges weakly-$*$ as $\e\to0$ to the limit measure $\gamma$ given by 
\begin{equation}
\gamma =
 \lambdaO\otimes \tilde\gamma,\quad
  \tilde\gamma:=\frac12\bigl(\delta_{-1}+\delta_1\bigr).
\label{eq:29}
\end{equation}
 Here weak-$*$ convergence is to be interpreted in the duality
  with continuous functions in $\cDom$ (thus considering
  $\P(\cDom)$ as a weakly-$*$ closed convex subset of
  the space $\M(\cDom)=\big(\Cb(\cDom)\big)'$ of
  signed Borel measures with finite total variation) i.e.
\[
 \lim_{\e\to0}\int_{\cDom}\phi(x,\xi)\,\d\gamma_\e=
\int_\cDom \phi(x,\xi) \, \d\gamma(x,\xi) = \frac12 \int_\T
\bigl(\phi(x,-1)+\phi(x,1)\bigr)\, \d \lambdaO(x),
\quad \text{for any }\phi\in \Cb(\cDom).
\]
\medskip

We should interpret the behaviour of $\gamma_\e$ as follows. In the limit $\e\to0$, the deep wells at $\xi=\pm 1$ force particles to stay increasingly close to the bottom of the wells. 
However, at any given $\e>0$, there is a positive probability that a particle switches from one well to the other in any given period of time. The rate at which this happens is governed by the local structure of $H$ near $\xi=\pm 1$ and near $\xi=0$, and becomes very small---of order $\e^{-1}\exp(-1/\e)$, as we shall see below. 

In the limit $\e=0$, the behaviour of particles in the $\xi$-direction is no longer recognizable as diffusional in nature. In the $\xi$-direction a particle can only be in one of two states $\xi=\pm1$, which we therefore interpret as the $A$ and $B$ states. Of the diffusional movement in the $\xi$-direction only a jump process remains, in which a particle at $\xi=-1$ jumps with a certain rate to position $\xi=1$, or vice versa. 

\subsection{Spatiochemical rescaling}\label{sec:rescaling}

Since the jumping (chemical reaction) rate at finite $\e>0$ is of order $\e^{-1}\exp(-1/\e)$, the limiting reaction rate will be zero unless we rescale the system appropriately. This requires us to speed up time by a factor of $\e\exp(1/\e)$. At the same time, the diffusion rate in the $x$-direction remains of order $1$ as $\e\to0$, and the rescaling should preserve this. In order to obtain a limit in which both diffusion in $x$ and chemical reaction in $\xi$ enter at rates that are of order $1$, we use the freedom of choosing the parameter $\tau$ that we introduced above.

We therefore choose $\tau$ equal to 
\begin{equation}
\label{def:tau_e}
\tau_\e := \e\exp(1/\e),
\end{equation}
and we then find the differential equation
\begin{equation}
\label{eq:pde-epsilon2}
\partial_t \rhoe - \Delta_x \rhoe - 
  \tau_\e \partial_\xi(\partial_\xi\rhoe +\tfrac1\e\rhoe\,\partial_\xi H) = 0\quad
\text{in $\D'(\Dom\times(0,\infty))$,}
\end{equation}
which clearly highlights the different treatment of $x$ and $\xi$: the diffusion in $x$ is independent of $\tau_\e$ while  the diffusion and convection in the $\xi$-variable are accelerated by a factor $\tau_\e$.

\subsection{Switching to the density variable}
\label{subsec:density_var}

As is already suggested by the behaviour of the invariant measure
$\gamma_\e$, the solution $\rhoe$ will become strongly
concentrated at the extremities $\{\pm 1\}$ of the $\xi$-domain
$(-1,1)$.
 This is the reason why it is useful to interpret $\rhoe$ as a family
  $\rhoe(t,\cdot)$ of 
  \emph{time-dependent measures},
  instead of functions. It turns out that the densities $\ue(t,\cdot)$
  \[
  \ue(t,\cdot) := \frac{\d\rho(t,\cdot)}{\d\gamma_\e}
  \]
  of
  $\rhoe(t,\cdot)$ with respect to $\gamma_\e$ also play a crucial
  role and it is often 
  convenient to have both representations at our disposal, freely switching
  between them.
In terms of the variable $\ue$ equation~\pref{eq:pde-epsilon2} becomes
\begin{equation}
  \label{eq:pde-u}
  \partial_t \ue - \Delta_x \ue  - \tau_\e(\partial_{\xi\xi}^2\,u_\e - \tfrac1\e\partial_\xi H\partial_\xi \ue)=0
  \quad\text{in }(0,+\infty)\times\Dom,
\end{equation}
 supplemented
with the boundary conditions
\begin{equation}
\label{bc:xi}
\partial_\xi u_\e(t, x,\pm 1) = 0
\quad\text{for all }x\in\T,\qquad
 \nnabla_x u_\e(t, x,\xi)\cdot \mathbf n =0\quad \text{on
  }\partial\T\times [-1,1],\qquad t>0.
\end{equation}
We choose an initial condition
\begin{equation}
\label{ic:u}
u_\e(0,x,\xi) = u^0_\e(x,\xi),
\qquad\text{for all $(x,\xi)\in \Dom$},\quad
 \text{with}\quad \rho^0_\e=u^0_\e\gamma_\e\in \P(\cDom).
\end{equation}

\medskip
Let us briefly say something about the functional-analytic setting. 
It is well known (see e.g.~\cite{DaPRatoLunardi07}) that the operator
$A_\e:=-\Delta_x-\tau_\e \partial^2_{\xi\xi}+(\tau_\e/\e)
 H' \,\partial_\xi$ with Neumann boundary conditions~\eqref{bc:xi} has a
self-adjoint realization in the space $H_\e := L^2(\Dom;\gamma_\e)$. Therefore the weak form of equation~\eqref{eq:pde-u} can be written as 
\begin{equation}
\label{eq:58}
b_\e(\partial_t u(t),v) + a_\e(u(t),v) =0
\qquad \text{for all $v\in V_\e$},
\end{equation}
where the bilinear forms $a_\e$ and $b_\e$ are defined by
\[
b_\e:H_\e\times H_\e\to \R, \qquad
b_\e(u,v) := \int_D u\,v\,\d \gamma_\e,
\]
and
\begin{align*}
&V_\e := W^{1,2}(\Dom;\gamma_\e):=\Big\{u\in L^2(\Dom\GG,\gamma_\e)\cap W^{1,1}_{\rm loc}(\Dom):
  \int_\Dom |\nnabla_{x,\xi}u|^2\,\d\gamma_\e<+\infty\Big\},\\
&a_\e:V_\e\times V_\e\to\R, \qquad a_\e(u,v):=\int_\Dom A_\e u\,v\,\d\gamma_\e=
  \int_\Dom
  \Big(\nnabla_x u\nnabla_x v+\tau_\e \partial_\xi
  u_\e\partial_\xi\, v\Big)\,\d\gamma_\e.
\end{align*}
{Since $V_\e$ is densely and continuously imbedded in $H_\e$},
standard results on variational evolution equations in an Hilbert 
triplet (see e.g.\ \cite{LionsMagenes72,Brezis83}) and their regularizing effects show that a unique solution exists in $C([0,\infty);H_\e) \cap C^\infty((0,\infty);V_\e)$ for
every initial datum $u^0_\e\in H_\e$.

\subsection{Main result I: weak convergence of $\rho_\e$ and $u_\e$}

The following theorem is the first main result of this paper.
It states that for every time $t\ge 0$ the 
measures $\rho_\e(t)$ solutions of \eqref{eq:pde-epsilon2} weakly-$*$ converge
to a limiting measure $\rho(t)$ in $\P(\cDom)$, whose density $u(t)=
\frac{\d\rho(t)}{\d\gamma}$ is the solution of the limit system~\eqref{pde:limit}. 
Note that for a function $u\in L^2(\cDom,\gamma)$ the traces $u^\pm=u(\cdot,\pm1)\in L^2(\Omega)$
are well defined (in fact, the map $u\mapsto (u^-,u^+)$ is an isomorphism between
  $L^2(\cDom,\gamma)$ and $L^2(\Omega,\tfrac 12\lambdaO;\R^2)).$

We state our result in a general form, which holds even for signed
measures in $\M(\cDom)$.
\begin{theorem}
\label{th:1}
Let $\rho_\e=u_\e\,\gamma_\e\in \mathrm C^0([0,+\infty);\M(\cDom))$ be the solution
of~(\ref{eq:pde-epsilon2}--\ref{ic:u}) with initial datum~$\rho_\e^0$.
If
\begin{equation}
  \label{eq:52}
  \sup_{\e>0}\int_\Dom |u^0_\e|^2\,\d\gamma_\e<+\infty
\end{equation}
and
\begin{equation}
  \label{eq:53}
  \text{$\rho_\e^0$ weakly-$*$ converges to
    $\rho^0=u^0\gamma=\frac 12u^{0-}\,\lambdaO\otimes \delta_{-1}+
    \frac12u^{0+}\,\lambdaO\otimes\delta_{+1}$ as $\e\downarrow0$,}
\end{equation}
then $u^0\in L^2(\cDom\GG,\gamma)$, $u^{0,\pm}\in L^2(\Omega)$, and
for every $t\ge0$ the solution $\rho_\e(t)$ weakly-$*$ converge to
\begin{equation}
  \label{eq:51}
  \rho(t)=u(t)\,\gamma=\frac12 u^-(t)\,\lambdaO\otimes \delta_{-1}+
  \frac12u^+(t)\,\lambdaO\otimes\delta_{+1},
\end{equation}
whose densities $u^\pm$ belong to $C^0([0,+\infty);L^2(\Omega))\cap
C^1((0,+\infty);W^{1,2}(\Omega))$ 
and solve the system
\begin{subequations}
\label{pb:limit}
\begin{alignat}2
  \label{eq:32}&\partial_t u^+ - \Delta_x u^+ = k(u^--u^+)
  \qquad&&\text{in  }\Omega\times(0,+\infty)\\
  \label{eq:33}&\partial_t u^- - \Delta_x u^- = k(u^+-u^-)\qquad&&\text{in  }\Omega\times(0,+\infty)\\
  \label{eq:34}&u^\pm(0) = u^{0,\pm}&&\text{in }\Omega.
\end{alignat}
\end{subequations}
The positive constant $k$ in \emph{(\ref{eq:32},b)} can be characterized as the
asymptotic minimal transition cost
\begin{equation}
\label{def:k}
k=\frac1\pi \sqrt{|H''(0)|H''(1)}=
\lim_{\e\downarrow0}\min\Big\{
\tau_\e\int_{-1}^1
\big(\varphi'(\xi)\big)^2\,\d\tilde\gamma_\e: \varphi\in
W^{1,2}(-1,1),\ \varphi(\pm 1)=\pm \tfrac12\Big\}.
\end{equation}
\end{theorem}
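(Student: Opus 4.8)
The plan is to read \eqref{eq:58} as a family of linear, self-adjoint evolution equations governed by the symmetric Dirichlet forms $a_\e$ on the Hilbert spaces $H_\e=L^2(\Dom;\gamma_\e)$, and to obtain convergence of the solutions from \emph{Mosco-convergence} of $a_\e$ to a limit form $a$, combined with convergence of the inner products $b_\e$. Because the reference measures $\gamma_\e$ vary with $\e$ and concentrate on $\{\xi=\pm1\}$, I would first fix a notion of convergence across the $H_\e$: call $u_\e\in H_\e$ weakly convergent to $u\in H:=L^2(\cDom;\gamma)\cong L^2(\Omega,\tfrac12\lambdaO;\R^2)$ if $\sup_\e\|u_\e\|_{H_\e}<\infty$ and $\rho_\e=u_\e\gamma_\e\weakto u\gamma$ in $\M(\cDom)$, and strongly convergent if moreover $\|u_\e\|_{H_\e}\to\|u\|_H$. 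With this notion $b_\e(u_\e,v_\e)=\int u_\e v_\e\,\d\gamma_\e\to b(u,v)=\tfrac12\int_\Omega(u^-v^-+u^+v^+)\,\d\lambdaO$ is essentially built into the definition, so the whole problem reduces to identifying $a$ and proving $a_\e\to a$ in the sense of Mosco. Granted that, the abstract theory of variational evolution equations (semigroup/resolvent convergence under Mosco-convergence, together with the contraction estimate $\|u_\e(t)\|_{H_\e}\le\|u^0_\e\|_{H_\e}$ coming from \eqref{eq:58}) delivers $\rho_\e(t)\weakto\rho(t)$ for every $t\ge0$.

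The limit form is dictated by the weak formulation of \eqref{pb:limit}: on pairs $u\leftrightarrow(u^-,u^+)$ with $u^\pm\in W^{1,2}(\Omega)$,
\[
a(u,v)=\tfrac12\int_\Omega\bigl(\nnabla_x u^-\!\cdot\nnabla_x v^-+\nnabla_x u^+\!\cdot\nnabla_x v^+\bigr)\,\d\lambdaO+\tfrac{k}{2}\int_\Omega(u^+-u^-)(v^+-v^-)\,\d\lambdaO .
\]
The spatial part is benign: since $\gamma_\e=\lambdaO\otimes\tilde\gamma_\e$ with $\tilde\gamma_\e\weakto\tfrac12(\delta_{-1}+\delta_{1})$, the tangential gradients simply survive on the two limiting sheets $\{\xi=\pm1\}$. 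The reaction term is the crux, and it emerges from the chemical energy $\tau_\e\int_\Dom(\partial_\xi u_\e)^2\,\d\gamma_\e$. To compute $k$ I would solve the one-dimensional problem \eqref{def:k} explicitly: its Euler--Lagrange equation gives $\varphi_\e'(\xi)\propto\rme^{H(\xi)/\e}$, hence the minimum equals $\bigl(Z_\e\int_{-1}^1\rme^{H/\e}\,\d\xi\bigr)^{-1}$, and Laplace/Watson asymptotics at the nondegenerate well $\xi=1$ (giving $Z_\e\sim\sqrt{2\pi\e/H''(1)}$) and at the barrier $\xi=0$ (giving $\int_{-1}^1\rme^{H/\e}\,\d\xi\sim\rme^{1/\e}\sqrt{2\pi\e/|H''(0)|}$) yield, after inserting $\tau_\e=\e\,\rme^{1/\e}$ from \eqref{def:tau_e}, the explicit value of $k$ in \eqref{def:k}.

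For the Mosco-convergence the $\limsup$ (recovery) inequality is constructive: given $u^\pm\in W^{1,2}(\Omega)$, I would set $u_\e(x,\xi)$ equal to $u^\mp(x)$ near $\xi=\mp1$ and, for each fixed $x$, let it interpolate between these values along the \emph{optimal} profile from \eqref{def:k} rescaled by the local jump $u^+(x)-u^-(x)$. By construction the slice energy realizes the minimal transition cost, so the chemical energy converges to the reaction part of $a$ (after matching normalizations with \eqref{pb:limit}), the spatial energy converges, and $u_\e\to u$ strongly. The $\liminf$ inequality is where the real difficulty sits: starting from $u_\e\weakto u$ with $\sup_\e a_\e(u_\e,u_\e)<\infty$, I must prove that the limit sheet-traces $u^\pm$ are well defined and lie in $W^{1,2}(\Omega)$, and that $\liminf_\e\tau_\e\int_\Dom(\partial_\xi u_\e)^2\,\d\gamma_\e$ dominates the reaction part of $a$. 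Disintegrating $\gamma_\e=\lambdaO\otimes\tilde\gamma_\e$ in $x$ reduces this to a slice estimate: for profiles tested against the concentrating $\tilde\gamma_\e$, the transition cost between the two wells is asymptotically at least the minimal cost \eqref{def:k} times the squared jump. The delicate points, which I expect to be the main obstacle, are (i) extracting well-defined traces $u^\pm$ with their $x$-regularity as the diffuse $\xi$-mass collapses onto $\{\pm1\}$, and (ii) making the slice lower bound uniform enough in $x$ to integrate it against $\lambdaO$.

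Finally, assembling the pieces: Mosco-convergence $a_\e\to a$ together with the hypotheses \eqref{eq:52}--\eqref{eq:53} (the uniform bound guaranteeing that no mass is lost in the limit, and the weak-$*$ convergence of the data furnishing convergent initial conditions $u_\e^0\to u^0$) feeds the abstract convergence theorem to give, for each $t\ge0$, $\rho_\e(t)\weakto\rho(t)=u(t)\gamma$ with $u^\pm$ solving \eqref{pb:limit}. The regularity $u^\pm\in C^0([0,\infty);L^2(\Omega))\cap C^1((0,\infty);W^{1,2}(\Omega))$ then follows from the analytic smoothing of the self-adjoint semigroup generated by the closed limit form $a$.
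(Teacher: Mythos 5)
Your overall strategy---convergence of the quadratic forms plus an abstract principle converting it into convergence of the evolution problems---is the same as the paper's, and your concrete ingredients (the optimal profile $\varphi_\e'\propto \rme^{H/\e}$, the Laplace asymptotics for $Z_\e$ and $I_\e$, the recovery family built by interpolating along the optimal transition profile) coincide with what the paper does in Section~\ref{sec:Gamma}. But the two steps you treat as available off the shelf are exactly where the paper has to work, and as written they are genuine gaps. First, the appeal to ``the abstract theory of variational evolution equations (semigroup/resolvent convergence under Mosco-convergence)'' does not apply here: the classical equivalences (e.g.\ \cite[Th.~3.16]{Brezis73}, \cite[Th.~13.6]{DalMaso93}) are formulated on a \emph{fixed} Hilbert space, i.e.\ for a fixed inner product $b$, whereas here both $H_\e=L^2(\Dom;\gamma_\e)$ and $b_\e$ vary with $\e$ and degenerate as $\gamma_\e$ concentrates on $\Omega\times\{\pm1\}$. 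The paper makes precisely this point in Section~\ref{sec:structure} and therefore proves its own convergence result (Theorem~\ref{thm:evolconv}): uniform coercivity of $\b_\e$ (Lemma~\ref{le:coercivity}), the regularization estimates \eqref{eq:26}--\eqref{eq:27}, Arzel\`a--Ascoli compactness of $t\mapsto\rho_\e(t)$ in $\M(\cDom)$, passage to the limit in the integrated identity \eqref{eq:25} tested against recovery families $\sigma_\e$ (the weak--strong principle of Corollary~\ref{cor:Gamma}), and identification of the limit via non-degeneracy of $\b$ and density of $\Domain(\a)$ in $\Domain(\b)$. Unless you either reproduce such an argument or invoke, and verify the hypotheses of, a Mosco framework for varying Hilbert spaces, the central implication ``form convergence $\Rightarrow\rho_\e(t)\weakto\rho(t)$'' is unproven. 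Note also that even your claim that $b_\e(u_\e,v_\e)\to b(u,v)$ is ``built into the definition'' is not accurate: for weak--strong pairs it follows from the $\Gamma$-liminf inequality for $\b_\e$ through the polarization trick of Corollary~\ref{cor:Gamma}, not from the definition of weak convergence alone.

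Second, you correctly identify the $\liminf$ inequality as the analytic core, but you leave both of its difficulties unresolved, and they do not resolve themselves. The paper's route is: (i) the bound $\tau_\e\int_\Dom(\partial_\xi u_\e)^2\,\d\gamma_\e\le C$ forces $\partial_\xi u_\e\to0$ in $L^2(\Omega\times\omega_\delta)$, $\omega_\delta:=(-1,-\delta)\cup(\delta,1)$, because there the density of $\tilde\gamma_\e$ is bounded below by $Z_\e^{-1}\rme^{-h_\delta/\e}$ with $h_\delta=\sup_{\omega_\delta}H<1$ and $Z_\e\,\e^{-1}\rme^{(h_\delta-1)/\e}\to0$; (ii) the traces $u_\e^\pm$ are compared with weighted averages $\tilde u_\e(x)=J_\e^{-1}\int u_\e(x,\xi)\psi(\xi)\,\d\tilde\gamma_\e(\xi)$, which converge strongly in $L^2(\Omega)$ by Rellich's theorem (using the $\nnabla_x$ part of the energy bound), yielding \emph{strong} $L^2(\Omega)$ convergence $u_\e^\pm\to u^\pm$ (Lemma~\ref{le:1}); (iii) with that, your worry about uniformity in $x$ evaporates: the slice inequality $\tau_\e\int_{-1}^1(\partial_\xi u_\e(x,\cdot))^2\,\d\tilde\gamma_\e\ge k_\e\,(u_\e^+(x)-u_\e^-(x))^2$ holds pointwise in $x$ by the very definition of the minimal cost, and one integrates in $x$ and passes to the limit using (ii); the $W^{1,2}(\Omega)$ regularity of $u^\pm$ and the spatial part of the liminf come from a cutoff-plus-Jensen argument. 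Finally, a bookkeeping point: the limit of the slice minimum you compute is $\sqrt{|H''(0)|H''(1)}/2\pi$, which is $k/2$ and is the coefficient of the squared jump $(u^+-u^-)^2$ in the limit \emph{quadratic} form, so that the bilinear form $a$ carries $k$ and the system \eqref{pb:limit} has rate $k$; your assertion that the asymptotics directly ``yield the explicit value of $k$'' glosses over this factor of two, which must be tracked when matching normalizations.
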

\begin{remark}[The variational structure of the limit problem]
  The ``$\e=0$'' limit problem~(\ref{eq:32}-\ref{eq:34}) admits the same variational
  formulation of  the ``$\e>0$'' problem we introduced in
  Section~\ref{subsec:density_var}.
  Recall that $\gamma$ is the measure defined in~\eqref{eq:29} as
  the weak limit of $\gamma_\e$; we set $H:=L^2(\cDom,\gamma)$,
  and for every $\rho=u\gamma$ with $u\in H$ we set $u^{\pm}(x):=
  u(x,\pm 1)\in L^2(\Omega,\lambdaO)$.
  We define 
  \begin{equation}
    \label{eq:30}
    b(u,v):=\int_\sDom u(x,\xi)v(x,\xi)\,\d\gamma(x,\xi)=
    \frac 12\int_\Omega \Big(u^+v^++u^-v^-\Big)\,\d \lambdaO.
  \end{equation}
  Similarly, we set $V:=\big\{u\in H: u^\pm\in
  W^{1,2}(\Omega)\big\}$, which is continuously and densely imbedded in $H$, and
  \begin{equation}
    \label{eq:31} 
    a(u,v):=
    \frac12\int_\Omega \Big(\nnabla_x u^+\nnabla_x v^++\nnabla_x
    u^-\nnabla_x v^-+k\big(u^+-u^-\big)(v^+-v^-)\Big)\,\d \lambdaO.
  \end{equation}
  Then
  the system (\ref{pb:limit}a,b,c) can be formulated
  as
  \begin{equation}
    \label{eq:59}
    b(\partial_t u(t),v) + a(u(t),v)=0\quad
    \text{for every $t>0$ and $v\in V$},
  \end{equation}
  which has the same structure as \eqref{eq:58}.
\end{remark}

\subsection{Main result II: a stronger convergence of $u_\e$}
\label{subsec:mainresultII}

Weak-$*$ convergence in the sense of measures is a natural choice in
order to describe the limit of $\rho_\e$, since the densities $u_\e$
and the limit density $u=(u^+,u^-)$ are defined on different domains
{with respect to different reference measures}.
Nonetheless it is possible to {consider a} stronger
convergence
which better characterizes the limit, and to prove that it is satisfied by the solutions of our problem. 

This stronger notion is modeled on Hilbert spaces  (or, more
generally, on Banach spaces with a locally uniformly convex norm),
where strong convergence is equivalent to weak convergence together with the convergence of the norms:
\begin{equation}
\label{equiv:strong-norms}
x_n \to x \quad \Longleftrightarrow \quad x_n \weakto x\quad  \mbox{and} \quad \|x_n\|\to \|x\|.
\end{equation}
In this spirit, the next result states that under the additional
request of ``strong'' convergence of the initial data $u_\e^0$, we
have ``strong'' convergence of the densities $u_\e$;
we refer to
  \cite{Reshetnjak68,Hutchinson86} (see also
  \cite[Sec. 5.4]{AmbrosioGigliSavare05})
  for further references in a measure-theoretic setting.

\medskip

\begin{theorem}
\label{th:2}
Let $\rho_\e,$ $\rho_\e^0$ be as in Theorem \ref{th:1}. If moreover
\begin{equation}
  \label{eq:54}
  \lim_{\e\downarrow0}b_\e(u_\e^0,u_\e^0)
  =b(u^0,u^0),
\end{equation}
then for every $t>0$ we have
\begin{equation}
  \label{eq:55}
  \lim_{\e\downarrow0}b_\e(u_\e(t),u_\e(t))= b(u(t),u(t))
\end{equation}
and
\begin{equation}
  \label{eq:56}
  \lim_{\e\downarrow0}a_\e(u_\e(t),u_\e(t))=
  a(u(t),u(t)).
\end{equation}
\end{theorem}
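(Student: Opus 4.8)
The plan is to base everything on the energy-dissipation structure of the variational evolution equations \eqref{eq:58} and \eqref{eq:59} and to turn the resulting lower bounds into equalities by feeding in the convergence of the initial energies \eqref{eq:54}. Testing \eqref{eq:58} with $v=u_\e(t)\in V_\e$ (legitimate by the $C^\infty((0,\infty);V_\e)$ regularity recalled in Section~\ref{subsec:density_var}) and using the symmetry of $b_\e$, then integrating in time, gives the energy identity
\begin{equation*}
\tfrac12\,b_\e(u_\e(t),u_\e(t)) + \int_0^t a_\e(u_\e(s),u_\e(s))\,\d s = \tfrac12\,b_\e(u_\e^0,u_\e^0),
\end{equation*}
while testing with $v=\partial_t u_\e(t)$ and using the symmetry of $a_\e$ shows that $s\mapsto a_\e(u_\e(s),u_\e(s))$ is nonincreasing on $(0,\infty)$. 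The identical two facts hold for the limit flow \eqref{eq:59}, with $b,a,u$ replacing $b_\e,a_\e,u_\e$, thanks to the regularity $u^\pm\in C^1((0,\infty);W^{1,2}(\Omega))$ from Theorem~\ref{th:1}.

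Next I would prove two lower-semicontinuity inequalities along the weak-$*$ convergence $\rho_\e(s)=u_\e(s)\gamma_\e\weakto\rho(s)=u(s)\gamma$ supplied by Theorem~\ref{th:1} for every $s$. For the mass term, lower semicontinuity of $\int_\Dom u_\e^2\,\d\gamma_\e$ under $\rho_\e(s)\weakto\rho(s)$ together with $\gamma_\e\weakto\gamma$ is a Reshetnyak-type statement for the convex superlinear integrand $z\mapsto z^2$ (see \cite{Reshetnjak68} and \cite[Sec.~5.4]{AmbrosioGigliSavare05}), giving $\liminf_\e b_\e(u_\e(s),u_\e(s))\ge b(u(s),u(s))$. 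For the Dirichlet term I would invoke the $\Gamma$-$\liminf$ half of the Mosco convergence $a_\e\to a$, namely $\liminf_\e a_\e(u_\e(s),u_\e(s))\ge a(u(s),u(s))$; this is exactly the place where the concentrating factor $\tau_\e(\partial_\xi\,\cdot\,)^2$ collapses onto the reaction contribution $k(u^+-u^-)^2$ with $k$ the asymptotic transition cost \eqref{def:k}, and it is precisely the ingredient already required to establish Theorem~\ref{th:1}, so I would reuse it. Integrating this pointwise inequality in time and applying Fatou's lemma yields $\liminf_\e\int_0^t a_\e(u_\e(s),u_\e(s))\,\d s\ge\int_0^t a(u(s),u(s))\,\d s$.

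I would then combine these ingredients. For each $\e$ the energy identity equates $x_\e+y_\e$, with $x_\e:=\tfrac12 b_\e(u_\e(t),u_\e(t))$ and $y_\e:=\int_0^t a_\e(u_\e(s),u_\e(s))\,\d s$, to $\tfrac12 b_\e(u_\e^0,u_\e^0)$; by \eqref{eq:54} the latter converges to $\tfrac12 b(u^0,u^0)$, which by the limit energy identity equals $x+y$ where $x:=\tfrac12 b(u(t),u(t))$ and $y:=\int_0^t a(u(s),u(s))\,\d s$. Hence $\lim_\e(x_\e+y_\e)=x+y$, while the two liminf inequalities give $\liminf_\e x_\e\ge x$ and $\liminf_\e y_\e\ge y$. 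The elementary observation that these three facts force $x_\e\to x$ and $y_\e\to y$ (since $\limsup_\e x_\e=\lim_\e(x_\e+y_\e)-\liminf_\e y_\e\le x$, and symmetrically) proves \eqref{eq:55} and, simultaneously, the integral convergence $\int_0^t a_\e(u_\e(s),u_\e(s))\,\d s\to\int_0^t a(u(s),u(s))\,\d s$ for every $t>0$.

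Finally I would upgrade this integral convergence of the Dirichlet energies to the pointwise statement \eqref{eq:56}. Set $f_\e(s):=a_\e(u_\e(s),u_\e(s))$ and $f(s):=a(u(s),u(s))$. From the previous step $\int_I f_\e\to\int_I f$ on every subinterval $I\subset(0,\infty)$, from the Mosco $\liminf$ one has $\liminf_\e f_\e(s)\ge f(s)$ pointwise, and $f_\e$ is nonincreasing. Fixing $t>0$ and $h>0$, monotonicity gives $f_\e(t)\le h^{-1}\int_{t-h}^{t}f_\e$, so $\limsup_\e f_\e(t)\le h^{-1}\int_{t-h}^{t}f$; letting $h\downarrow0$ and using that $f$ is continuous on $(0,\infty)$ — because $u^\pm\in C^1((0,\infty);W^{1,2}(\Omega))$ makes $s\mapsto a(u(s),u(s))$ continuous — gives $\limsup_\e f_\e(t)\le f(t)\le\liminf_\e f_\e(t)$, i.e. \eqref{eq:56}. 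The one genuinely hard step is the $\Gamma$-$\liminf$ inequality for $a_\e$: it alone must encode how the singular $\xi$-diffusion generates the reaction term in the limit, and it carries the specific value of $k$; all the remaining steps are bookkeeping with the energy identities and elementary real analysis.
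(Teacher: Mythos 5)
Your proof is correct, and its core architecture coincides with the paper's: the paper also combines the energy identity \eqref{eq:26} (and its ``$\e=0$'' analogue \eqref{id:limit-dissipation}), the $\Gamma$-liminf inequalities \eqref{eq:35} together with Fatou's lemma, and exactly your elementary ``sum converges while each summand is liminf-bounded below'' argument to obtain \eqref{eq:55} and the convergence of $\int_0^t a_\e(u_\e(s),u_\e(s))\,\d s$. Where you genuinely diverge is the final upgrade from integral to pointwise convergence of the Dirichlet forms, i.e.\ \eqref{eq:56}: the paper applies the same limsup/liminf scheme to the second identity \eqref{id:a-intb}, $t\,\a_\e(\rho_\e(t))+2\int_0^t r\,\b_\e(\partial_t\rho_\e(r))\,\d r=\int_0^t\a_\e(\rho_\e(r))\,\d r$, which forces it to handle the time-derivative measures $\partial_t\rho_\e$ and their lower semicontinuity; you instead exploit the monotonicity of $s\mapsto a_\e(u_\e(s),u_\e(s))$ (a fact the paper itself records when deriving \eqref{eq:26}--\eqref{eq:27}) plus the averaging bound $f_\e(t)\le h^{-1}\int_{t-h}^t f_\e$ and continuity of the limit energy. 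Your route is more elementary---it needs no control of $\b_\e(\partial_t\rho_\e)$ and no convergence statement for time derivatives---at the modest price of invoking the $C^1((0,+\infty);W^{1,2}(\Omega))$ regularity of the limit to get continuity (in fact left-continuity at $t$ would suffice, and monotonicity of the limit energy gives that too); the paper's route via \eqref{id:a-intb} is the one that generalizes verbatim to the abstract setting of Theorem~\ref{thm:evolconv}, where no monotonicity or regularity of the limit flow is presupposed. One small point to make airtight in your write-up: the energy identities and the monotonicity claims are first proved for smooth data and then extended to all $u_\e^0\in H_\e$ by the density argument around \eqref{norms:He}; your parenthetical appeal to $C^\infty((0,\infty);V_\e)$ regularity covers testing for $t>0$ but not by itself the integration down to $t=0$.
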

  Applying, e.g., \cite[Theorem 5.4.4]{AmbrosioGigliSavare05} we can
  immediately deduce the following result, which clarifies 
  the strengthened form of convergence that we are
  considering here. This convergence is strong enough to allow us to pass to the limit in 
  nonlinear functions of $u_\e$:
  \begin{corollary}
    Under the same assumptions as in Theorem \ref{th:2} we
    have 
    \begin{align}
      \label{eq:1}
      \lim_{\e\downarrow0}\int_\Dom f(x,&\xi,u_\e(x,\xi,t))\,\d\gamma_\e(x,\xi)=
        \int_\cDom f(x,\xi,u(x,\xi,t))\,\d\gamma(x,\xi)
        \\\notag&= \frac12\int_\Omega
        \Big(f(x,-1,u^-(x,t))+f(x,1,u^+(x,t))\Big)\,\d\lambdaO(x)
        \quad\text{for every $t>0$},
      \end{align}
    where $f:\cDom\times \R\to \R$ is an arbitrary continuous function
    satisfying 
    the quadratic growth condition
    \begin{displaymath}
      |f(x,\xi,r)|\le A+Br^2\quad\text{for every }(x,\xi)\in \cDom,\ r\in \R
    \end{displaymath}
    for suitable nonnegative constants $A,B\in \R$.
  \end{corollary}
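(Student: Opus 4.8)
The plan is to read the two preceding theorems as the single statement that, at each fixed time $t>0$, the densities $u_\e(t)$ converge to $u(t)$ \emph{strongly} in the $L^2$-sense relative to the varying reference measures $\gamma_\e\weakto\gamma$. Indeed, writing $\rho_\e(t)=u_\e(t)\gamma_\e$, Theorem~\ref{th:1} gives the weak-$*$ convergence $\rho_\e(t)\weakto\rho(t)=u(t)\gamma$ in $\M(\cDom)$, while Theorem~\ref{th:2} (formula~\eqref{eq:55}) gives convergence of the squared norms, $b_\e(u_\e(t),u_\e(t))=\int_\Dom|u_\e(t)|^2\,\d\gamma_\e\to\int_\cDom|u(t)|^2\,\d\gamma=b(u(t),u(t))$. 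This pair of facts is precisely the definition of strong $L^2$-convergence along varying measures, so the desired identity~\eqref{eq:1} is exactly the continuity of integral functionals under such convergence, and follows from \cite[Theorem 5.4.4]{AmbrosioGigliSavare05}.

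To explain the mechanism and isolate the only delicate point, I would argue as follows. For a \emph{bounded} continuous integrand $f$, the convergence $\int_\Dom f(x,\xi,u_\e)\,\d\gamma_\e\to\int_\cDom f(x,\xi,u)\,\d\gamma$ already follows from the weak-$*$ convergence alone, through convergence of the graph (push-forward) measures $(\mathrm{id},u_\e)_\#\gamma_\e$; strong convergence is what guarantees that no mass of these measures escapes to infinity in the $u$-variable. The genuine difficulty is the \emph{quadratic growth} $|f|\le A+Br^2$, for which the integrand is unbounded. Here the plan is to truncate: replace $f(x,\xi,r)$ by $f(x,\xi,T_k(r))$ with $T_k$ the truncation at level $k$, pass to the limit in the bounded truncated functional, and then control the truncation error by $B\int_{\{|u_\e|>k\}}|u_\e|^2\,\d\gamma_\e$ uniformly in $\e$ before letting $k\to\infty$.

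The main obstacle is therefore exactly this last uniform bound, i.e.\ the \emph{uniform integrability} of the family $\{|u_\e(t)|^2\}$ with respect to $\gamma_\e$: one must show $\sup_{\e}\int_{\{|u_\e|>k\}}|u_\e|^2\,\d\gamma_\e\to0$ as $k\to\infty$. This is where the strong convergence~\eqref{eq:55} is indispensable---the weak-$*$ convergence of $\rho_\e$ alone (Theorem~\ref{th:1}) would only permit passage to the limit in integrands that are \emph{affine} in $u_\e$. The convergence of the second moments rules out concentration of $|u_\e|^2\gamma_\e$, and combined with the weak convergence it upgrades to $|u_\e|^2\gamma_\e\weakto|u|^2\gamma$, which supplies the required uniform tail estimate.

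Finally, the second equality in~\eqref{eq:1} is a direct computation: since $\gamma=\lambdaO\otimes\frac12(\delta_{-1}+\delta_1)$ and $u(x,\pm1,t)=u^\pm(x,t)$, integrating $f(x,\xi,u(x,\xi,t))$ against $\gamma$ splits into the two traces $\xi=\pm1$ with equal weights $\tfrac12$, yielding the stated average of $f(x,-1,u^-)$ and $f(x,1,u^+)$ over $\Omega$ against $\lambdaO$.
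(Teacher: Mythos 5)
Your operative proof is the paper's own: the paper's entire argument for this corollary is to note that Theorem~\ref{th:1} gives $\rho_\e(t)\weakto\rho(t)$, that Theorem~\ref{th:2} gives $b_\e(u_\e(t),u_\e(t))\to b(u(t),u(t))$ --- i.e.\ weak convergence plus convergence of norms of the measure--function pairs $(\gamma_\e,u_\e(t))$ --- and to invoke \cite[Theorem 5.4.4]{AmbrosioGigliSavare05}; the second equality in \eqref{eq:1} is the same direct computation from $\gamma=\lambdaO\otimes\tfrac12(\delta_{-1}+\delta_{1})$ that you give. So your first and last paragraphs constitute a correct proof, essentially identical to the paper's.

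The explanatory sketch in between, however, contains a genuine error. It is \emph{not} true that for bounded continuous $f$ the convergence $\int_\Dom f(x,\xi,u_\e)\,\d\gamma_\e\to\int_\cDom f(x,\xi,u)\,\d\gamma$ follows from the weak-$*$ convergence $u_\e\gamma_\e\weakto u\gamma$ alone, nor does that weak-$*$ convergence imply convergence of the associated graph measures: oscillation, not escape of mass to infinity, is the obstruction. For instance, $u_\e(x,\xi):=1+\sin(x_1/\e)$ satisfies $u_\e\gamma_\e\weakto 1\cdot\gamma$ (the $x$-marginal of every $\gamma_\e$ is $\lambdaO$, so the oscillating part vanishes against continuous test functions), yet for the bounded integrand $f(r)=\min\{r^2,4\}$ one has
\begin{displaymath}
  \int_\Dom f(u_\e)\,\d\gamma_\e=\int_\Dom \bigl(1+\sin(x_1/\e)\bigr)^2\,\d\gamma_\e
  \;\longrightarrow\;\tfrac 32\;\neq\; 1=\int_\cDom f(1)\,\d\gamma.
\end{displaymath}
(Consistently, $b_\e(u_\e,u_\e)\to\tfrac32\neq 1$, so such a family is excluded precisely by \eqref{eq:55}.) Your own third paragraph --- that weak-$*$ convergence alone only permits integrands affine in $u_\e$ --- is the correct statement and flatly contradicts your second. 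The upshot is that the norm convergence of Theorem~\ref{th:2} is needed twice: it rules out concentration (your uniform tail estimate, which is fine and can be justified via the lower semicontinuity \eqref{eq:35} applied to truncations), \emph{and} it rules out oscillation, which is what makes the bounded, truncated step work. That bounded-$f$ step is exactly the nontrivial content of \cite[Theorem 5.4.4]{AmbrosioGigliSavare05} (weak convergence plus convergence of norms implies strong convergence of measure--function pairs), so your truncation scheme cannot treat it as a free consequence of weak-$*$ convergence; as written, the sketch is circular at that point, and the corollary should rest on the citation (or on an actual proof of the weak-plus-norms implies strong implication), as in your first paragraph and in the paper.
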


\subsection{Structure of the proof}
\label{sec:structure}
Let us briefly explain the structure of the proof of Theorems~\ref{th:1} and~\ref{th:2}. This will also clarify the term $\Gamma$-convergence in the title, and highlight the potential of the method for wider application.

The analogy between~\pref{eq:58} and~\pref{eq:59} suggests to pass to
the limit in these weak formulations, or even better, in their
  equivalent integrated forms
  \begin{equation}
    \label{eq:4}
    b_\e(u_\e(t),v_\e)+\int_0^t a_\e(u_\e(t),v_\e)\,\d
    t=b(u_\e^0,v_\e),\quad
    b(u(t),v)+\int_0^t a(u(t),v)\,\d t=b(u^0,v).
  \end{equation}
Applying standard regularization estimates for the solutions to
\eqref{eq:58} and a weak coercivity property of $b_\e$, it is not
difficult to prove that $u_\e(t)$ ``weakly'' converges to $u(t)$ for
every $t>0$, i.e. 
\begin{displaymath}
  \rho_\e(t)=u_\e(t)\gamma_\e\weakto\rho(t)=u(t)\gamma\quad
  \text{weakly-$*$ in $\M(\cDom)$.}
\end{displaymath}
The concept of weak convergence of densities that we are using
here is thus the same as in Theorem~\ref{th:1}, i.e. weak-$*$
convergence of the corresponding measures in $\M(\cDom)$.

In order to pass to the limit in \eqref{eq:4} the central property is the 
following
\emph{weak-strong} convergence principle:
\begin{quote}
  For every $v\in V$ there exists $v_\e\in V_\e$ with
  $v_\e\weakto v$ as $\e\to0$ such that
  for every $u_\e\weakto u$
\[
b_\e(u_\e,v_\e) \to b(u,v)
\qquad\text{and}\qquad
a_\e(u_\e,v_\e) \to a(u,v).
\]
\end{quote}
Note that the previous property implies in particular that
  recovery family $v_\e$ converges ``strongly'' to $v$, according to the notion
  considered
  by Theorem~\ref{th:2}, i.e. $v_\e\to v$ iff $v_\e\weakto v$ with  both $b_\e(v_\e,v_\e) \to b(v,v)$ and $a_\e(v_\e,v_\e) \to a(v,v)$.
%
Corollary~\ref{cor:Gamma} shows that this weak-strong convergence
property can be derived from
$\Gamma$-convergence in the ``weak'' topology of the family of quadratic forms 
\begin{equation}
  q_\e^\kappa(u):=b_\e(u,u)+\kappa\, a_\e(u,u)\quad
    \text{to}\quad
      q^\kappa(u):=b(u,u)+\kappa\,a(u,u)\quad\text{for }\kappa>0.\qquad
\label{eq:5}
\end{equation}
In order to formulate this property in the standard framework of
  $\Gamma$-convergence we will extend $a_\e$ and $b_\e$ to
  lower semi-continuous quadratic
  functionals
  (possibly assuming the value $+\infty$) in the
  space $\M(\cDom)$, following the approach of
  \cite[Chap.~11-13]{DalMaso93}.
  While the $\Gamma$-convergence of $b_\e$ is a direct
    consequence of the weak convergence of $\gamma_\e$ to $\gamma$,
  the convergence of $a_\e$ is more subtle. The convergence of $a_\e$ and the structure of the limit depends critically on the choice of $\tau_\e$~(defined in~\eqref{def:tau_e}): as we show in Section~\ref{subsec:minimal_cost}, the scaling of $\tau_\e$ in terms of $\e$ is chosen exactly such that the strength of the `connection' between $\xi=-1$ and $\xi=1$ is of order $O(1)$ as $\e\to0$. 

\medskip
The link between $\Gamma$-convergence and stability of evolution
  problems of parabolic type is well known  when $b_\e=b$ is a fixed and coercive bilinear
form (see, e.g., \cite[Chap.~3.9.2]{Attouch84}) and can therefore be
considered as the scalar product of the Hilbert space $H_\e\equiv H$.
In this case the embedding of the problems in a bigger topological
vector space (the role played by $\M(\cDom)$ in our situation) is no
more needed, and one can deal with the weak and strong topology of
$H$,
obtaining the following equivalent characterizations
(see e.g.~\cite[Th. 3.16]{Brezis73} and~\cite[Th. 13.6]{DalMaso93}):
\begin{enumerate}
\item Pointwise (strong) convergence in $H$ of the solutions of the evolution
  problems; 
\item Pointwise convergence in $H$ of the resolvents of the linear
    operators associated to the bilinear forms $a_\e$;
\item Mosco-convergence in $H$ of the quadratic forms associated
    to $a_\e$;
\item $\Gamma$-convergence in the weak topology of $H$ of the
  quadratic forms $b+\kappa\, a_\e$ to $b+\kappa\, a$
  for every $\kappa>0$.
\end{enumerate}

In the present case, where $b_\e$ does depend on $\e$,
$\Gamma$-convergence of the extended quadratic forms $b_\e +
\kappa\, a_\e$
with respect to the weak-$*$ topology of $\M(\cDom)$
is thus a natural extension of the latter condition;
Theorem~\ref{thm:Gamma} can be interpreted as essentially proving
a slightly stronger version of this property.
Starting from this $\Gamma$-convergence result, we will derive
  the convergence of the evolution problems by a simple and general
  argument, which we will present in Section \ref{sec:proof}.

\subsection{Discussion}
\label{sec:discussion}

The result of Theorem~\ref{th:1} is amongst other things a rigorous version of the result of Kramers~\cite{Kramers40} that was mentioned in the introduction. It shows that the simple reaction-diffusion system~\pref{pb:limit} can indeed be viewed as an upscaled version of a diffusion problem in an augmented phase space; or, equivalently, as an upscaled version of the movement of  a Brownian particle in the same augmented phase space. 

At the same time it generalizes the work of Kramers by adding the spatial dimension, resulting in a limit system which---for this choice of $\tau_\e$, see below for more on this choice---captures both reaction and diffusion effects. 

\medskip
\emph{Measures versus densities. } It is interesting to note the roles of the measures $\rho_\e,\rho$ and their densities $u_\e,u$ with respect to $\gamma_\e,\gamma$.
 The variational formulation of the equations are done in terms of the densities $u_\e,u$
  but the limit procedure is better understood in terms of the measures $\rho_\e,\rho$,
  since a weak-$*$ convergence is involved. This also allows for a unification of two problems with
  a  different structure (a Fokker-Planck equation for $u_\e$ and a reaction-diffusion system
  for the couple $u^-,u^+$.)
\medskip

\emph{Gradient flows.} 
The weak formulation \pref{eq:58} shows also that a solution $u_\e$
can be interpreted as a gradient flow of the quadratic energy
$\frac 12 a_\e(u,u)$ with respect to the $L^2(\Dom\GG, \gamma_\e)$ distance. Another gradient flow structure for the solutions of the same problem could be obtained by a different choice of energy functional and distance: for example, as proved in~\cite{JordanKinderlehrerOtto98}, Fokker-Planck equations
like \eqref{eq:pde-epsilon2} can be interpreted also as the gradient flow
of the  relative entropy functional
\begin{align}
  \label{eq:14}
  \mathcal H(\rho|\gamma_\e) := &
     \int_\sDom \frac{\d
      \rho}{\d\gamma_\e}\log\Big(\frac{\d\rho}{\d\gamma_\e}\Big)\,\d\gamma_\e
\end{align}
in the space $\P(\cDom)$ of probability measures endowed with
the so-called $L^2$-Wasserstein distance
(see e.g.\ \cite{AmbrosioGigliSavare05}).
Other recent work~\cite{AdamsDirrPeletierZimmerInPrep} suggests that the Wasserstein setting can be the most natural for understanding diffusion as a limit of the motion of Brownian particles, but in this case it is not obvious how to interpret the limit system in the framework of gradient flows on probability measures, and how to obtain it in the limit as $\e \to 0$. 

In a forthcoming paper we investigate a new distance for the limit problem, modeled on the reaction-diffusion term, and we study how the limit couple of energy and dissipation can be obtained as a $\Gamma$-limit. 
\medskip

\emph{The choice of $\tau_\e$.} 
In this paper the time scale $\tau_\e$ is chosen to be equal to $\e \exp(1/\e)$, and it is a natural question to ask about the limit behaviour for different choices of $\tau_\e$. If the scaling is chosen differently---i.e.\ if $\tau_\e \e^{-1}\exp(-1/\e)$ converges to $0$ or $\infty$---then completely different limit systems are obtained:
\begin{itemize}
\item If $\tau_\e \ll \e \exp(1/\e)$, then the reaction is not accelerated sufficiently as $\e\to0$, and the limit system will contain only diffusion (i.e.\ $k=0$ in~\pref{pb:limit});
\item If $\tau_\e \gg \e \exp(1/\e)$,  on the other hand, then the reaction is made faster and faster as $\e\to0$, resulting in a limit system in which the chemical reaction $A\leftrightharpoons B$ is in continuous equilibrium. Because of this, both $A$ and $B$ have the same concentration $u$, and $u$ solves the diffusion problem
\begin{alignat*}2
&\partial_t u = \Delta u,  \qquad&&\text{for }x\in \T, \ t>0\\
&u(0,x) = \frac12\bigl(u^{0,+}(x)+ u^{0,-}(x)\bigr)\qquad &&\text{for }x\in \T.
\end{alignat*}
Note the instantaneous equilibration of the initial data in this system.
\end{itemize}

While the scaling in terms of $\e$ of $\tau_\e$ can not be chosen differently without obtaining structurally different limit systems, there is still a choice in the prefactor. For $\tau_\e := \tilde \tau \e e^{1/\e}$ with $\tilde \tau>0$ fixed, the prefactor $\tilde \tau$ will appear in the definition~\pref{def:k} of $k$. 

There is a also a modelling aspect to the choice of $\tau$. In this paper we use no knowledge about the value of $\tau$ in the diffusion system at finite $\e$; the choice $\tau=\tau_\e$ is motivated by the wish to have a limit system that contains both diffusive and reactive terms. If one has additional information about the mobility of the system in the $x$- and $\xi$-directions, then the value of $\tau$ will follow from this.

%
%

\medskip

\emph{Equal rate constants.} The assumption of equal depth of the two minima of $H$ corresponds to the assumption (or, depending on one's point of view, the result) that the rate constant $k$ in~\pref{pb:limit} is the same for the two reactions $A\to B$ and $B\to A$. The general case requires a slightly different choice for $H$, as follows.

Let the original macroscopic equations for the evolution of $A$ and $B$ (in terms of densities that we also denote $A$ and $B$) be
\begin{subequations}
\label{pb:limit_skew}
\begin{align}
\partial_t A - \Delta A &= k^- B - k^+ A\\
\partial_t B - \Delta B &= k^+ A - k^- B.
\end{align}
\end{subequations}
Choose a fixed function $H_0\in C^\infty([-1,1])$ such that $H_0'(\pm1)=0$ and $H_0(1)-H_0(-1)=\log k^- - \log k^+$. We then construct the enthalpy $H_\e$ by setting
\[
H_\e := H_0 + \frac1\e H,
\]
where $H$ is the same enthalpy function as above. The same proof as for the equal-well case then gives convergence of the finite-$\e$ problems to~\pref{pb:limit_skew}.
%
%

\medskip

\emph{Equal diffusion constants.} It is possible to change the setup such that the limiting system 
has different diffusion rate in $A$ and $B$. We first write equation~\pref{eq:pde-epsilon2} as
\[
\partial_t \rho - \div D_\e F_\e = 0,
\]
where the mobility matrix $D_\e\in\R^{(d+1)\times(d+1)}$ and the flux $F_\e$ are given by
\[
D_\e = \begin{pmatrix}
\mathrm{I} & 0\\
0 & \tau_\e
\end{pmatrix}
\qquad\text{and}\qquad
F_\e = F_\e(\rho) = \begin{pmatrix} 
\nabla u\\
\nabla \rho + \rho\nabla H
\end{pmatrix}
\]
By replacing the identity matrix block $I$ in $D_\e$ by a block of the form $a(\xi)\,\mathrm I$ the $x$-directional diffusion can be modified as a function of $\xi$. This translates into two different diffusion coefficients for $A$ and $B$.

\medskip

\emph{The function $H$.} The limit result of Theorem~\ref{th:1} shows that only a small amount of information about the function $H$ propagates into the limit problem: specifically, the local second-order structure of $H$ around the wells and around the mountain-pass point. 

One other aspect of the structure of $H$ is hidden: the fact that we rescaled the $\xi$ variable by a factor of $\sqrt {\tau_\e}$ can also be interpreted as a property of $H$, since the effective distance between the two wells, as measured against the intrinsic distance associated with the Brownian motion, is equal to $2\sqrt{\tau_\e}$ after rescaling. 

We also assumed in this paper that $H$ has only `half' wells, in the sense that $H$ is defined on $[-1,1]$ instead of $\R$. This was for practical convenience, and one can do essentially the same analysis for a function $H$ that is defined on $\R$. In this case one will regain a slightly different value of $k$, namely $k=\sqrt{|H''(0)|H''(1)}/2\pi$. (For this reason this is also the value found by Kramers~\cite[equation (17)]{Kramers40}).

\medskip
\emph{Single particles versus multiple particles, and concentrations versus probabilities. }
The description of this paper of the system in terms of a  probability measure $\rho$ on $\Dom$ is the description of the probability of a \emph{single} particle. This implies that the limit object $(u^-,u^+)$ should be interpreted as the density (with respect to $\gamma$) of a limiting probability measure, again describing a single particle. 

This is at odds with common continuum modelling philosophy, where the main objects are concentrations (mass or volume) that represent a large number of particles; in this philosophy the solution $(u^-,u^+)$ of~\pref{pb:limit} should be viewed as such a concentration, which is to say as the projection onto $x$-space of a \emph{joint probability distribution of a large number of particles}. 

For the simple reaction $A\leftrightharpoons B$ these two interpretations are actually equivalent. This arises from the fact that $A\to B$ reaction events in each of the particles are independent of each other; therefore the joint distribution of a large number $N$ of particles factorizes into a product of  $N$ copies of the distribution of a single particle. For the case of this paper, therefore, the distinction between these two views is not important.

\medskip
\emph{More general reactions. }
The remark above implies that the situation will be different for systems where reaction events cause differences in distributions between the particles, such as the reaction $A+B\leftrightarrows C$. This can be recognized as follows: a particle $A$ that has just separated from a $B$ particle (in a reaction event of the form $C\to A+B$) has a position that is highly correlated with the corresponding $B$ particle, while this is not the case for all the other $A$ particles. Therefore the $A$ particles will not have the same distribution. The best one can hope for is that in the limit of a large number of particles the distribution becomes the same in some weak way. This is one of the major obstacles in developing a similar connection as in this paper for more complex reaction equations.

\drop{
\subsection{Chemical considerations}

Both this paper and that of Kramers~\cite{Kramers40} operate in a classical, i.e.\ non-quantum, world view, in which the position of the system can be considered to be a point instead of a probability distribution over points. The formulation that lies at the basis of this paper---chemical reaction as the movement of a Brownian particle in an appropriate space---then arises as follows. The description that follows is a summary of the derivation in~\cite{Kramers40}.

First one makes the assumption that the system (the molecule under consideration) is under a continuous barrage of interactions with other molecules, and that this barrage can be represented by a random fluctuating force with mean zero. This leads to a movement of the particle in an appropriate phase space. At this stage the phase space is a combination of the translational and the internal chemical degrees of freedom (similar to $\Dom$ in this paper) combined with the velocities in the same degrees of freedom. 

Next  one assumes that a time scale $\overline t$ exists with the following property: on one hand the velocity changes little over times of length $\overline t$, and on the other hand the correlation between the forces at times $t$ and $t+\overline t$ is negligible. The result is an extension of the Ornstein-Uhlenbeck stochastic process in which the deterministic force on the particle depends not only on the velocity but also on the position of the molecule. Finally one makes an assumption that the random forces are large with respect to the deterministic forces. This results in a Maxwellian velocity distribution, and causes the degrees of freedom to reduce to only the position in phase space.

\medskip
The original work of Kramers was aimed at the calculation of reaction rate constants (i.e.\ the coefficient $k$) by expressing it in terms of properties of $H$ as in~\pref{def:k}. This only translates the problem, since often little is known about the enthalpy function $H$. 
}
  \subsection{Plan of the paper}
  One of the main difficulties in the proof of Theorem~\ref{th:1}, namely
the singular behaviour given by the concentration of the invariant
measure $\gamma_\e$ onto the two lines at $\xi=\pm1$,
can be overcome by working in the underlying space of (signed or
probability) measures in $\cDom$. This point of view is introduced in Section~\ref{sec:variational}.  Section \ref{sec:Gamma} contains the basic $\Gamma$-convergence
  results (Theorem \ref{thm:Gamma}) and the proof of Theorem \ref{th:1} and of Theorem \ref{th:2}.
The argument showing the link between $\Gamma$-convergence of the
  quadratic forms $a_\e,b_\e$ and the convergence of the solutions to
  the evolution problems (see the comments in section
  \ref{sec:structure}) is presented in Section \ref{sec:proof} in a
  general form, which can 
  can be easily applied to other situations.



\section{Formulation of the evolution equations in measure spaces}
\label{sec:variational}
\subsection*{The Kramers-Smoluchowski equation}
We first summarize the functional framework introduced above. 
Let us denote by $(\cdot,\cdot)_\e$
the scalar product in $\R^d\times \R$ defined by
\begin{equation}
  \label{eq:19}
  (\xx,\yy)_{\e}:=x\cdot y+\tau_\e \, \xi\,\eta,\quad
  \text{for every }\xx=(x,\xi),\ \yy=(y,\eta)\in \R^d\times \R,
\end{equation}
with the corresponding norm $\|\cdot\|_\e$. We introduced two Hilbert spaces
\[
H_\e:=L^2(\Dom,\gamma_\e) 
\qquad\text{and}\qquad
V_\e=W^{1,2}(\Dom,\gamma_\e),
\]
and
the bilinear forms
\begin{align}
  \label{eq:18}
  b_\e(u,v):=&\int_\sDom u\, v\,\d\gamma_\e\quad&&\text{for every }u,v\in H_\e,\\
  \label{eq:20}
  a_\e(u,v):=&\int_\sDom (\nnabla_{x,\xi}
  u,\nnabla_{x,\xi}v)_{\e}\,\d\gamma_\e&&\text{for every }u,v\in V_\e,
\end{align}
with which \eqref{eq:pde-u} has the variational
formulation 
\begin{equation}
  \label{eq:22}
  b_\e(\partial_t u_\e,v)+a_\e(u_\e,v)=0\quad \text{for every }v\in
  V_\e,\ t>0;\quad
  u_\e(0,\cdot)=u_\e^0.
\end{equation}

The main technical difficulty in studying the limit behaviour of
\eqref{eq:22} as $\e\downarrow0$ consists of the
$\e$-dependence of the functional spaces $H_\e,V_\e$. Since
for our approach it is
crucial to work in a fixed ambient space, we embed the solutions of
\eqref{eq:22}
in the space of finite Borel measures $\M(\cDom)$ by associating to $u_\e$ the
measure
$\rho_\e:=u_\e\gamma_\e$.
We thus introduce the quadratic forms
\begin{align}
  \label{eq:23}
  \b_\e(\rho)&:=b_\e(u,u)&&\text{if $\rho\ll\gamma_\e$ and $u=\frac{\d\rho}{\d\gamma_\e}\in H_\e,$}\\
  \label{eq:24}
  \a_\e(\rho)&:=a_\e(u,u)&&\text{if $\rho\ll\gamma_\e$ and
    $u=\frac{\d\rho}{\d\gamma_\e}\in V_\e,$}
\end{align}
trivially extended to $+\infty$ when $\rho$ is not absolutely
continuous
with respect to $\gamma_\e$ or its density $u$ does not belong to
$H_\e$ or $V_\e$ respectively. Denoting by $\mathsf{Dom}(\a_\e)$ and
$\mathsf{Dom}(\b_\e)$ their proper domains, we still denote by
$\a_\e(\cdot,\cdot)$ and
$\b_\e(\cdot,\cdot)$ the corresponding bilinear forms defined on 
$\mathsf{Dom}(\a_\e)$ and
$\mathsf{Dom}(\b_\e)$ respectively.
Setting $\rho_\e:=u_\e\gamma_\e$, $\sigma:=v\gamma_\e$,
\eqref{eq:22} is equivalent to the integrated form
\begin{equation}
  \label{eq:25}
  \b_\e(\rho_\e(t),\sigma)+\int_0^t
  \a_\e(\rho_\e(r),\sigma)\,\d r=\b_\e(\rho_\e^0,\sigma)\quad
  \text{for every }\sigma\in \mathsf{Dom}(\a_\e).
\end{equation}
We also recall the standard estimates
\begin{alignat}2
  \label{eq:26}
  &\frac 12\b_\e(\rho_\e(t))+\int_0^t\a_\e(\rho_\e(r))\,\d r=\frac
  12\b_\e(\rho_\e^0)&&\quad\text{for every }t\ge0,\\
    \label{id:a-intb}
    &t\,\a_\e(\rho_\e(t))+2\int_0^t r\b_\e(\partial_t \rho_\e(r))\,\d
    r=\int_0^t \a_\e(\rho_\e(r))\,\d r
  &&\quad\text{for every }t\geq0,\\
 \label{eq:27}
  &\frac 12\b_\e(\rho_\e(t))+t\,\a_\e(\rho_\e(t))+t^2\b_\e(\partial_t
  \rho_\e(t))\le \frac12 \b_\e(\rho_\e^0)
  &&\quad\text{for every }t>0.  
\end{alignat}
Although versions of these expressions appear in various places, we were unable to find a reference that completely suits our purposes. We therefore briefly describe their proof, and we use the more conventional formulation in terms of the bilinear forms $a_\e$ and $b_\e$ and spaces $H_\e$ and $V_\e$; note that $b_\e$ is an inner product for $H_\e$, and $b_\e + a_\e$ is an inner product for $V_\e$.

When $u_0$ is sufficiently smooth, standard results (e.g.~\cite[Chapter VII]{Brezis83}) provide the existence of a solution $u_\e\in C([0,\infty);V_\e)\cap C^\infty((0,\infty); V_\e)$, such that the functions $t\mapsto a_\e(u_\e(t))$ and $t\mapsto b_\e(\partial_tu_\e(t))$ are non-increasing; in addition, the solution operator (semigroup) $S_t$ is a contraction in $H_\e$.   For this case all three expressions can be proved by differentiation. 

In order to extend them to all $u_\e^0\in H_\e$, we note that for fixed $t>0$ the two norms on $H_\e$ given by (the square roots of)
\begin{equation}
\label{norms:He}
u_\e^0 \mapsto \frac12 b_\e(u_\e^0)
\qquad\text{and}\qquad
u_\e^0 \mapsto\frac 12b_\e(S_tu_\e^0)+\int_0^ta_\e(S_ru^0_\e)\,\d r
\end{equation}
are identical by~\eqref{eq:26} on a $H_\e$-dense subset. If we approximate a general $u_\e^0\in H_\e$ by smooth $u_{\e,n}^0$, then the sequence $u_{\e,n}^0$ is a Cauchy sequence with respect to both norms; by copying the proof of completeness of the space $L^2(0,\infty;V_\e)$ (see e.g.~\cite[Th. IV.8]{Brezis83}) it follows that the integral in~\eqref{norms:He} converges. This allows us to pass to the limit in~\eqref{eq:26}. The argument is similar for~\eqref{eq:27}, when one writes the sum of~\eqref{eq:26} and~\eqref{id:a-intb} as 
\begin{equation}
  \label{eq:11}
  \frac12 b_\e(u_\e(t)) + ta_\e(u_\e(t)) + 2\int_0^t rb_\e(\partial_t u_\e(r))\, \d r = \frac12 b_\e(u_\e^0).
\end{equation}
Finally, \eqref{eq:27} follows by \eqref{eq:11} since $r\mapsto
\b_\e(\partial_t u_\e(r))$ is non-increasing.

\subsection*{The reaction-diffusion limit}
We now adopt the same point of view to
formulate the limit reaction-diffusion system in the setting of measures. Recall that for $u\in H := L^2(\cDom,\gamma)$ we set $u^{\pm}(x):=
u(x,\pm 1)$, and thus we defined the function space
\[
V:=\big\{u\in H: u^\pm\in
W^{1,2}(\Omega)\big\},
\]
and the bilinear forms
\begin{align}
&b(u,v)=
  \frac 12\int_\Omega \Big(u^+v^++u^-v^-\Big)\,\d \lambdaO,\\
  &a(u,v):=
  \frac12\int_\Omega \Big(\nnabla_x u^+\nnabla_x v^++\nnabla_x
  u^-\nnabla_x v^-+k\big(u^+-u^-\big)(v^+-v^-)\Big)\,\d \lambdaO.
\end{align}  

As before we now extend these definitions to arbitrary measures by
\begin{align}
  \label{eq:23bis}
  \b(\rho)&:=b(u,u)&&\text{if $\rho\ll\gamma$ and $u=\frac{\d\rho}{\d\gamma}\in H$},\\
  \label{eq:24bis}
  \a(\rho)&:=a(u,u)&&\text{if $\rho\ll\gamma$ and
    $u=\frac{\d\rho}{\d\gamma}\in V$},
\end{align}
with corresponding bilinear forms $\b(\cdot,\cdot)$ and $\a(\cdot,\cdot)$; problem (\ref{eq:32},b,c) can be reformulated as 
\[
\b(\partial_t \rho(t),\sigma) + \a(\rho(t),\sigma)=0\quad
  \text{for every $t>0$ and $\sigma\in \mathsf{Dom}(a)$},
\]
or in the integral form 
\begin{equation}
  \label{eq:25_0}
  \b(\rho(t),\sigma)+\int_0^t
  \a(\rho(r),\sigma)\,\d r=\b(\rho^0,\sigma)\quad
  \text{for every }\sigma\in \mathsf{Dom}(\a).
\end{equation}

Since both problems \eqref{eq:25} and \eqref{eq:25_0} are embedded
in the same measure space $\M(\cDom)$, we can
study the convergence of the solution $\rho_\e$ of \eqref{eq:25} as
$\e\downarrow 0$.


\section{\texorpdfstring{$\Gamma$}{Gamma}-convergence result for the quadratic forms \texorpdfstring{$\a_\e,\b_\e$}{a, b}}
\label{sec:Gamma}
The aim of this section is to prove the following
$\Gamma$-convergence result:
\begin{theorem}
  \label{thm:Gamma}
  If $\rho_\e\weakto\rho$ as $\e\downarrow0$ in $\M(\cDom)$ then
  \begin{equation}
    \label{eq:35}
    \liminf_{\e\downarrow0}\a_\e(\rho_\e)\ge \a(\rho),\quad
    \liminf_{\e\downarrow0}\b_\e(\rho_\e)\ge \b(\rho).
  \end{equation}
  For every $\rho\in \M(\cDom)$ such that
  $\a(\rho)+\b(\rho)<+\infty$ there exists a family $\rho_\e\in
  \M(\cDom)$ weakly-$*$ converging to $\rho$ such that
  \begin{equation}
    \label{eq:36}
    \lim_{\e\downarrow0}\a_\e(\rho_\e)=\a(\rho),\quad
    \lim_{\e\downarrow0}\b_\e(\rho_\e)=\b(\rho).
  \end{equation}
\end{theorem}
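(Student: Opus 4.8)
The plan is to prove the two parts of Theorem~\ref{thm:Gamma} separately, treating the $\liminf$-inequality (lower bound) and the recovery family (upper bound) in turn. For each part, the term $\b_\e$ is the easier one and the term $\a_\e$ is where all the difficulty lies, because of the singular concentration of $\gamma_\e$ onto $\{\xi=\pm1\}$ and the delicate balance encoded in the scaling $\tau_\e=\e\exp(1/\e)$.

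For the lower bound on $\b_\e$, I would argue as follows. Suppose $\rho_\e\weakto\rho$ with $\liminf_\e\b_\e(\rho_\e)<+\infty$ (otherwise there is nothing to prove). Then $\rho_\e=u_\e\gamma_\e$ with $\sup_\e\int_\Dom u_\e^2\,\d\gamma_\e<\infty$. I would use the fact that $\b_\e(\rho_\e)=\sup\{2\int\phi\,\d\rho_\e-\int\phi^2\,\d\gamma_\e:\phi\in\Cb(\cDom)\}$, which exhibits $\b_\e$ as a supremum of functionals that are continuous under weak-$*$ convergence of the pair $(\rho_\e,\gamma_\e)$. Since $\rho_\e\weakto\rho$ and $\gamma_\e\weakto\gamma$, for each fixed $\phi$ the quantity $2\int\phi\,\d\rho_\e-\int\phi^2\,\d\gamma_\e$ converges to $2\int\phi\,\d\rho-\int\phi^2\,\d\gamma$, whence $\liminf_\e\b_\e(\rho_\e)\ge 2\int\phi\,\d\rho-\int\phi^2\,\d\gamma$ for every $\phi$; taking the supremum over $\phi$ recovers $\b(\rho)$ and forces $\rho\ll\gamma$. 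The analogous variational (dual) representation of $\a_\e$ is the key tool for the lower bound on $\a_\e$: one writes $\a_\e$ as a supremum over smooth vector fields $\pphi$ of a linear-minus-quadratic expression tested against $\rho_\e$ and $\gamma_\e$, again continuous under weak-$*$ convergence, and passes to the liminf. The subtlety is that in the $\xi$-direction the natural test objects must carry the factor $\tau_\e$ and localize near $\xi=\pm1$; the point is that $\tau_\e\int(\varphi')^2\,\d\tilde\gamma_\e$ is exactly the quantity whose infimum (under the boundary constraint $\varphi(\pm1)=\pm\frac12$) produces the reaction constant $k$ in~\eqref{def:k}. I expect the main obstacle to be precisely this: showing that the cross-well gradient contribution in $\a_\e$ does not disappear but converges to the reaction term $\frac{k}{2}\int_\Omega(u^+-u^-)^2\,\d\lambdaO$, which requires a careful analysis of the one-dimensional transition profiles near the mountain pass and the use of Laplace-type asymptotics to evaluate $Z_\e$ and $\tau_\e\int(\varphi')^2\,\d\tilde\gamma_\e$.

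For the recovery family (upper bound), given $\rho=u\gamma$ with $\a(\rho)+\b(\rho)<\infty$, so that $u^\pm\in W^{1,2}(\Omega)$, I would construct $\rho_\e=u_\e\gamma_\e$ explicitly. In the $x$-direction I would simply retain the traces $u^\pm$; the construction is in the $\xi$-direction, where I would interpolate between the value $u^-(x)$ near $\xi=-1$ and $u^+(x)$ near $\xi=+1$ using the \emph{optimal} transition profile $\varphi_\e(\xi)$ realizing (up to $o(1)$) the minimum in~\eqref{def:k}. Concretely I would set $u_\e(x,\xi)=u^-(x)+(u^+(x)-u^-(x))\,\psi_\e(\xi)$ where $\psi_\e$ is the rescaled optimal profile normalized so that $\psi_\e(-1)=0$, $\psi_\e(+1)=1$, and $\tau_\e\int(\psi_\e')^2\,\d\tilde\gamma_\e\to k$. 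One then checks (i) $\rho_\e\weakto\rho$, which follows because $\gamma_\e$ concentrates at $\xi=\pm1$ and $\psi_\e\to 0,1$ there; (ii) $\b_\e(\rho_\e)\to\b(\rho)$, again by concentration of $\gamma_\e$ and dominated convergence; and (iii) $\a_\e(\rho_\e)\to\a(\rho)$, which splits into the $x$-gradient part $\frac12\int_\Omega(|\nabla u^+|^2+|\nabla u^-|^2)\,\d\lambdaO$ (handled by concentration as in $\b_\e$) plus the $\xi$-gradient part $\tau_\e\int u^2\,(\partial_\xi u_\e)^2$-type term, which by the choice of $\psi_\e$ produces exactly $\frac{k}{2}\int_\Omega(u^+-u^-)^2\,\d\lambdaO$.

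The hard part throughout is the $\a_\e$-analysis in the $\xi$-direction, and in particular matching the asymptotics of the transition cost to the explicit constant $k=\frac1\pi\sqrt{|H''(0)|H''(1)}$. This requires the Laplace/Watson expansion of $Z_\e=\int_{-1}^1 e^{-H/\e}\,\d\xi$ near the two minima $\xi=\pm1$ (governed by $H''(1)$) and of the complementary integral $\int e^{H/\e}$ near the maximum $\xi=0$ (governed by $|H''(0)|$), together with the observation that the optimal profile $\varphi$ solving the Euler--Lagrange equation for $\tau_\e\int(\varphi')^2\,\d\tilde\gamma_\e$ has derivative proportional to $e^{H/\e}$, so that the minimal cost is $\tau_\e\big(\int_{-1}^1 e^{H/\e}\,\d\xi\big)^{-1}Z_\e^{-1}$ up to the normalization by the boundary data; combining these asymptotics with the choice $\tau_\e=\e e^{1/\e}$ yields the stated $k$. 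Once this one-dimensional computation is in hand, both the lower-bound and recovery-family arguments reduce to routine weak-$*$ continuity and concentration estimates.
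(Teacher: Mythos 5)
Your recovery-sequence construction and your one-dimensional analysis are essentially the paper's own: the optimal profile solves $(e^{-H/\e}\varphi')'=0$, so $\varphi'\propto e^{H/\e}$, the minimal transition cost equals $\tau_\e Z_\e^{-1}I_\e^{-1}$ with $I_\e=\int_{-1}^1 e^{H/\e}\,\d\xi$, and Laplace's method near $\xi=0$ and $\xi=\pm1$ gives the constant; interpolating $u^\pm(x)$ with this profile and passing to the limit by dominated convergence is exactly how the paper proves \eqref{eq:36}. Two small slips there: the unit-jump cost tends to $k/2$, not $k$ (cf.\ \eqref{eq:40}; this is what makes the recovery limit come out as $\a(\rho)$, whose reaction part is $\tfrac k2\int_\Omega(u^+-u^-)^2\,\d\lambdaO$), and the derivative of the optimal profile localizes near the barrier top $\xi=0$, not near the wells $\xi=\pm1$.

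The genuine gap is in the liminf inequality for $\a_\e$. Your plan rests on a dual representation of $\a_\e$ as a supremum of expressions that are ``continuous under weak-$*$ convergence.'' That continuity is false: putting $\a_\e$ in dual form requires integrating by parts against $\gamma_\e$, so the linear term reads $\int \mathrm{div}_{\gamma_\e}\pphi\;\d\rho_\e$, where $\mathrm{div}_{\gamma_\e}$ carries the drift $\tfrac{\tau_\e}{\e}\partial_\xi H$; for a fixed ($\e$-independent) smooth field this coefficient blows up as $\e\downarrow0$, so nothing passes to the limit. You acknowledge that the test objects must be $\e$-dependent, but then weak-$*$ convergence of $\rho_\e$ no longer yields convergence of the linear term unless the coefficient functions converge uniformly on $\cDom$ --- and they do not: even for the natural choice $v_\e=\mathcal T_\e(v^-,v^+)$, whose $\xi$-part is annihilated by the Euler equation, one gets $A_\e v_\e=-\Delta_x v_\e$, which converges only to a function discontinuous at $\xi=0$, and the integration by parts additionally produces the boundary term $k_\e\int_\Omega (v^+-v^-)(u_\e^+-u_\e^-)\,\d\lambdaO$. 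Either way, the cross term can only be handled through the behaviour of the traces $u_\e^\pm$, and this is precisely the machinery the paper supplies and your proposal omits: (i) the splitting $\a_\e=\a_\e^1+\a_\e^2$ and the pointwise-in-$x$ bound $\tau_\e\int_{-1}^1(\partial_\xi u_\e(x,\cdot))^2\,\d\tilde\gamma_\e\ge k_\e\big(u_\e^+(x)-u_\e^-(x)\big)^2$, which holds by the very definition of the minimal cost; (ii) the compactness statement (Lemma \ref{le:1}) that a uniform bound on $\a_\e$ forces $u_\e^\pm\to u^\pm$ \emph{strongly} in $L^2(\Omega)$, whose proof needs the separate estimate $\partial_\xi u_\e\to0$ in $L^2(\Omega\times\omega_\delta)$ (from $\tau_\e^{-1}Z_\e e^{h_\delta/\e}\to0$) together with averaged functions bounded in $W^{1,2}(\Omega)$ and Rellich's theorem; and (iii) Jensen's inequality with cutoffs near $\xi=\pm1$ for the $x$-gradient part, which also identifies the limit densities $u^\pm$. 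Without (i)--(iii), or a worked-out substitute for them, the liminf half of the theorem is not proved.
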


Note that $\M(\cDom)$ endowed with the weak-$*$ topology is the dual of a
  separable Banach space, and therefore the sequential definition of
  $\Gamma$-convergence coincides with the topological
  definition~\cite[Proposition 8.1 and Theorem 8.10]{DalMaso93}; consequently Theorem~\ref{thm:Gamma} implies the $\Gamma$-convergence of the families $\a_\e$ and $\b_\e$. 
Theorem~\ref{thm:Gamma} actually states a stronger result, since the recovery sequence can be chosen to be the same for $\a_\e$ and $\b_\e$. This joint $\Gamma$-convergence of the families $\a_\e$ and $\b_\e$ is nearly equivalent with $\Gamma$-convergence of combined quadratic forms:

\begin{lemma}
\label{lemma:equivalence}
Theorem~\ref{thm:Gamma} implies the
  \begin{equation}
    {\text{ $\Gamma(\M(\cDom))$-convergence of}\quad\mathfrak
      q^\kappa_\e(\rho):=\b_\e(\rho)+\kappa\,\a_\e(\rho)\quad\text{to}\quad
    \q^\kappa(\rho):=\b(\rho)+\kappa\,\a(\rho)}
      \label{eq:6}
    \end{equation}
for each $\kappa>0$.

Conversely, if we assume~\eqref{eq:6}, then~\eqref{eq:36} holds, and~\eqref{eq:35} follows under the additional assumption
      \begin{equation}
        \label{eq:8}
        \limsup_{\e\downarrow0}\a_\e(\rho_\e)+\b_\e(\rho_\e)=C<+\infty.
      \end{equation}
\end{lemma}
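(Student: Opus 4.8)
The plan is to treat the two implications separately, dispatching the soft direction first. To see that Theorem~\ref{thm:Gamma} yields the $\Gamma$-convergence~\eqref{eq:6}, I would verify the two halves of the definition. For the $\liminf$ inequality, given any $\rho_\e\weakto\rho$, superadditivity of $\liminf$ together with $\kappa>0$ gives $\liminf_\e\q^\kappa_\e(\rho_\e)\ge\liminf_\e\b_\e(\rho_\e)+\kappa\,\liminf_\e\a_\e(\rho_\e)$, and the two separate bounds~\eqref{eq:35} finish this off. For the recovery inequality I may assume $\q^\kappa(\rho)<+\infty$, which (both forms being nonnegative) is equivalent to $\a(\rho)+\b(\rho)<+\infty$; then I take the \emph{single} common family $\rho_\e$ furnished by~\eqref{eq:36} and simply add the two limits, $\lim_\e\q^\kappa_\e(\rho_\e)=\b(\rho)+\kappa\,\a(\rho)=\q^\kappa(\rho)$. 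It is precisely here that one uses the extra strength of Theorem~\ref{thm:Gamma}, namely that one and the same recovery family serves $\a_\e$ and $\b_\e$ at once.

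For the converse I would start with~\eqref{eq:35}. Fix $\rho_\e\weakto\rho$ satisfying the a~priori bound~\eqref{eq:8}, so that $\a_\e(\rho_\e)$ and $\b_\e(\rho_\e)$ are each eventually dominated by $C+\delta$. The assumed $\Gamma$-$\liminf$ for $\q^\kappa_\e$ gives $\b(\rho)+\kappa\,\a(\rho)\le\liminf_\e\bigl(\b_\e(\rho_\e)+\kappa\,\a_\e(\rho_\e)\bigr)$ for every $\kappa>0$. Replacing $\kappa\,\a_\e(\rho_\e)$ by its eventual bound $\kappa(C+\delta)$ inside the $\liminf$ yields $\b(\rho)+\kappa\,\a(\rho)\le\liminf_\e\b_\e(\rho_\e)+\kappa(C+\delta)$, and letting $\kappa\downarrow0$ produces $\liminf_\e\b_\e(\rho_\e)\ge\b(\rho)$. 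Symmetrically, bounding $\b_\e(\rho_\e)\le C+\delta$, dividing by $\kappa$, and letting $\kappa\to\infty$ produces $\liminf_\e\a_\e(\rho_\e)\ge\a(\rho)$. This decoupling of the two forms out of the single combined inequality is the crux of the converse, and it is exactly why the bound~\eqref{eq:8} cannot be dropped: played off against the two regimes $\kappa\downarrow0$ and $\kappa\to\infty$, it is what separates $\b_\e$ from $\a_\e$.

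It then remains to deduce~\eqref{eq:36}. I would fix one value, say $\kappa=1$, and take the recovery family $\rho_\e$ provided by~\eqref{eq:6} for $\q^1_\e$, so that $\rho_\e\weakto\rho$ and $\b_\e(\rho_\e)+\a_\e(\rho_\e)\to\b(\rho)+\a(\rho)$. Since this family has bounded combined energy it satisfies~\eqref{eq:8}, whence the inequalities~\eqref{eq:35} just established apply to it and give $\liminf_\e\b_\e(\rho_\e)\ge\b(\rho)$ and $\liminf_\e\a_\e(\rho_\e)\ge\a(\rho)$. I would then invoke the elementary fact that if $s_\e+t_\e\to S+T$ with $\liminf_\e s_\e\ge S$ and $\liminf_\e t_\e\ge T$, then $s_\e\to S$ and $t_\e\to T$ (indeed $\limsup_\e s_\e=(S+T)-\liminf_\e t_\e\le S\le\liminf_\e s_\e$, and symmetrically). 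Applied with $s_\e=\b_\e(\rho_\e)$ and $t_\e=\a_\e(\rho_\e)$ this forces $\b_\e(\rho_\e)\to\b(\rho)$ and $\a_\e(\rho_\e)\to\a(\rho)$ separately, which is~\eqref{eq:36}. The single recurring obstacle is therefore the passage from control of the one combined form $\b_\e+\kappa\,\a_\e$ to control of each constituent, and the whole converse is organized around resolving it by varying $\kappa$ under the boundedness~\eqref{eq:8}.
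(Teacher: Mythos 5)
Your proof is correct and follows essentially the same route as the paper's: the forward direction by superadditivity of $\liminf$ plus the common recovery family, and the converse by playing the bound~\eqref{eq:8} against the two regimes $\kappa\downarrow0$ and $\kappa\to\infty$ to decouple $\b_\e$ from $\a_\e$, then applying~\eqref{eq:35} to the recovery family of $\q^1_\e$. The only difference is that you spell out the elementary splitting fact ($s_\e+t_\e\to S+T$ with the two $\liminf$ bounds forces separate convergence), which the paper leaves implicit.
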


\begin{proof}
The first part of the Lemma is immediate. For the second part, suppose that $\rho_\e\weakto\rho$ and satisfies \eqref{eq:8};
     the $\Gamma$-liminf inequality for $\q_\e^\kappa$
      yields
      \begin{align*}
        \liminf_{\e\downarrow0}\b_\e(\rho_\e)\ge
        \liminf_{\e\downarrow0}\q_\e^k(\rho_\e)-C\kappa\ge
        \q^\kappa(\rho)-C\kappa=
        \b(\rho)+\kappa\,\big(\a(\rho)-C\big)\quad\text{for every }\kappa>0,
      \end{align*}
      and therefore the second inequality of \eqref{eq:35} follows by
      letting $\kappa\downarrow0$. A similar argument yields the first
      inequality of \eqref{eq:35}.

      Concerning \eqref{eq:36}, $\Gamma$-convergence of $\mathfrak
      q_\e^1$ to $\q^1$ yields a recovery family
      $\rho_\e\weakto \rho$ such that
      \begin{displaymath}
        \lim_{\e\downarrow0}\a_\e(\rho_\e)+\b_\e(\rho_\e)=\a(\rho)+\b(\rho)<+\infty;
      \end{displaymath}
      In particular $\a_\e(\rho_\e)+\b_\e(\rho_\e)$ is uniformly bounded, so
      that \eqref{eq:35} yields the separate convergence \eqref{eq:36}.
\end{proof}

One of the most useful consequences of~\eqref{eq:6}
is contained in the next result (see e.g.\ \cite[Lemma
3.6]{PennacchioSavareColliFranzone05}).

\begin{corollary}[Weak-strong convergence]
  \label{cor:Gamma}
  Assume that \eqref{eq:6} holds for every $\kappa>0$ and
  let $\rho_\e,\sigma_\e\in \M(\cDom)$ be two families weakly converging to
  $\rho,\sigma$ as $\e\downarrow0$ and satisfying the uniform
  bound \eqref{eq:8}, i.e.
 \begin{equation}
    \label{eq:7}
    \limsup_{\e\downarrow0}\a_\e(\rho_\e)+\b_\e(\rho_\e)<+\infty,\quad
    \limsup_{\e\downarrow0}\a_\e(\sigma_\e)+\b_\e(\sigma_\e)<+\infty,
  \end{equation}   
  so that $\rho,\sigma$ belong to the domains of the bilinear form $\a$
  and $\b$.
    We have
  \begin{align}
    \label{eq:44}
    \lim_{\e\downarrow0}\a_\e(\sigma_\e)=\a(\sigma)\qquad\Longrightarrow&\qquad
    \lim_{\e\downarrow0}\a_\e(\rho_\e,\sigma_\e)=\a(\rho,\sigma)\\
    \lim_{\e\downarrow0}\b_\e(\sigma_\e)=\b(\sigma)\qquad\Longrightarrow&\qquad
   \lim_{\e\downarrow0}\b_\e(\rho_\e,\sigma_\e)=\b(\rho,\sigma).
  \end{align}
\end{corollary}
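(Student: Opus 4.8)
The plan is to deduce both implications from the separate $\Gamma$-liminf inequalities in~\eqref{eq:35}---which, by Lemma~\ref{lemma:equivalence}, already follow from the assumed joint $\Gamma$-convergence~\eqref{eq:6} together with the uniform bound~\eqref{eq:7}---by testing them against the affine combinations $\rho_\e+t\sigma_\e$ for all $t\in\R$. This is exactly the mechanism behind~\eqref{equiv:strong-norms}: convergence of a quadratic form along a weakly-$*$ convergent family promotes the associated mixed bilinear term to a convergent quantity.

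First I would record the elementary bounds. By~\eqref{eq:7} the four quantities $\a_\e(\rho_\e)$, $\b_\e(\rho_\e)$, $\a_\e(\sigma_\e)$, $\b_\e(\sigma_\e)$ stay bounded as $\e\downarrow0$, so the Cauchy--Schwarz inequality for the bilinear forms $\a_\e(\cdot,\cdot)$ and $\b_\e(\cdot,\cdot)$ shows that the mixed terms $\a_\e(\rho_\e,\sigma_\e)$ and $\b_\e(\rho_\e,\sigma_\e)$ are bounded as well; in particular each admits convergent subsequences.

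To prove the first implication, I would argue along a subsequence on which $\a_\e(\rho_\e)\to\alpha$ and $\a_\e(\rho_\e,\sigma_\e)\to\ell$, both finite, while along the full family $\a_\e(\sigma_\e)\to\a(\sigma)$ by hypothesis. For each fixed $t\in\R$ the family $\rho_\e+t\sigma_\e$ weakly-$*$ converges to $\rho+t\sigma$ and still satisfies~\eqref{eq:8}, so~\eqref{eq:35} applies to it; expanding by bilinearity and letting $\e\downarrow0$ yields
\[
\alpha+2t\ell+t^2\a(\sigma)\ \ge\ \a(\rho)+2t\,\a(\rho,\sigma)+t^2\a(\sigma),
\quad\text{i.e.}\quad
\alpha-\a(\rho)\ \ge\ 2t\big(\a(\rho,\sigma)-\ell\big)\quad\text{for all }t\in\R.
\]
Since the left-hand side is a fixed finite number and the right-hand side is linear in $t$, letting $t\to\pm\infty$ forces $\ell=\a(\rho,\sigma)$. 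Every convergent subsequence of the bounded sequence $\a_\e(\rho_\e,\sigma_\e)$ therefore has the same limit $\a(\rho,\sigma)$, so the whole family converges, which is~\eqref{eq:44}. The statement for $\b$ is obtained verbatim from the second inequality in~\eqref{eq:35} and the hypothesis $\b_\e(\sigma_\e)\to\b(\sigma)$.

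The one point requiring care is that the argument never produces convergence of $\a_\e(\rho_\e)$ itself, but only the inequality $\liminf_\e\a_\e(\rho_\e)\ge\a(\rho)$; this is why the undetermined constant $\alpha$ must be carried through the computation, and why it is essential to exploit the resulting inequality for $t$ of both signs, rather than, say, differentiating at $t=0$. All remaining ingredients---bilinearity, Cauchy--Schwarz, and the stability of the bound~\eqref{eq:8} under the affine combination $\rho_\e+t\sigma_\e$---are routine.
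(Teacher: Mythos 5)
Your proof is correct and is essentially the paper's own argument: both rest on polarizing the $\Gamma$-liminf inequalities \eqref{eq:35} (recovered from \eqref{eq:6} and the bound \eqref{eq:7} via Lemma~\ref{lemma:equivalence}) along linear combinations of the two families, with the hypothesis $\lim_{\e\downarrow0}\a_\e(\sigma_\e)=\a(\sigma)$ serving exactly to neutralize the diagonal term in $\sigma$. The only difference is bookkeeping: the paper tests with $r\rho_\e+r^{-1}\sigma_\e$, which keeps the cross term fixed, takes $\liminf$ directly against the constant $A=\limsup_{\e\downarrow0}\a_\e(\rho_\e)$, sends $r\downarrow0$, and then inverts the sign of $\sigma$ to get the matching upper bound, thereby avoiding your subsequence extraction and the $t\to\pm\infty$ linear-growth argument, but the mathematical content is the same.
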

\begin{proof}
  We reproduce here the proof of
  \cite{PennacchioSavareColliFranzone05} in the case of the quadratic forms
  $\a_\e$ \eqref{eq:44}.
  Note that by~\eqref{eq:7} and Lemma~\ref{lemma:equivalence} we can assume that $\rho_\e$ and $\sigma_\e$ satisfy~\eqref{eq:35}.
  For every positive scalar $ r>0$ we have
  \begin{displaymath}
    2\a_\e(\rho_\e,\sigma_\e)=2\a_\e(r\,\rho_\e,r^{-1}\sigma_\e)=
    \a_\e(r\rho_\e+ r^{-1}\sigma_\e)- r^2\a_\e(\rho_\e)- r^{-2}\a_\e(\sigma_\e).
  \end{displaymath}
  Taking the inferior limit as $\e\downarrow0$ and recalling
  \eqref{eq:35} we get for $A:=\limsup_{\e\downarrow0}\a_\e(\rho_\e)$
  \begin{displaymath}
   \liminf_{\e\downarrow0} 2\a_\e(\rho_\e,\sigma_\e)\ge
   \a(r\rho+r^{-1}\sigma)-r^2
   A- r^{-2}\a(\sigma)=
   2\a(\rho,\sigma)+r^2\big(\a(\rho)-A\big).
  \end{displaymath}
  Since $r>0$ is arbitrary and $A$ is finite by
    \eqref{eq:7} we obtain $ \liminf_{\e\downarrow0}
  \a_\e(\rho_\e,\sigma_\e)\ge \a(\rho,\sigma)$ and inverting the sign
  of $\sigma$ we get \eqref{eq:44}.
\end{proof}

We split the proof of Theorem \ref{thm:Gamma} in various steps.
\subsection{Estimates near $\Omega\times\{-1,1\}$.}
\begin{lemma}
  If $\rho_\e=u_\e\gamma_\e$ satisfies the uniform bound
  $\a_\e(\rho_\e)\le C<+\infty$ for every $\e>0$, then
  for every $\delta\in (0,1)$
  \begin{equation}\label{boundpartialxi}
    \partial_\xi u_\e \to 0 \quad \mbox{in } L^2(\T \times \omega_\delta),\quad \mbox{as } \e \to 0,
  \end{equation}
  where $\omega_\delta:= (-1,-\delta)\cup (\delta,1)$.
\end{lemma}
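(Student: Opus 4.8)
The plan is to read off from the hypothesis the quantity that is actually controlled, and then to exploit the explicit Gibbs weight in $\gamma_\e$ against the exponential growth of $\tau_\e$. First I would unpack $\a_\e(\rho_\e)$: by the definitions~\eqref{eq:20} and~\eqref{eq:19} one has
\[
\a_\e(\rho_\e)=\int_\Dom\Big(|\nnabla_x u_\e|^2+\tau_\e\,|\partial_\xi u_\e|^2\Big)\,\d\gamma_\e,
\]
so the assumed bound $\a_\e(\rho_\e)\le C$ gives in particular $\tau_\e\int_\Dom|\partial_\xi u_\e|^2\,\d\gamma_\e\le C$. Writing $\gamma_\e=\lambdaO\otimes\tilde\gamma_\e$ with $\d\tilde\gamma_\e=Z_\e^{-1}\rme^{-H/\e}\,\d\xi$ as in~\eqref{eq:28}, this is
\[
\frac{\tau_\e}{\Ld(\T)\,Z_\e}\int_\Omega\int_{-1}^1|\partial_\xi u_\e|^2\,\rme^{-H(\xi)/\e}\,\d\xi\,\d x\le C.
\]

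The key observation is that on $\omega_\delta$ the weight $\rme^{-H/\e}$ is not exponentially small. Since $H$ attains its maximal value $1$ only at $\xi=0$ and $\overline{\omega_\delta}$ is compact with $0\notin\overline{\omega_\delta}$, the number $h_\delta:=\max_{\xi\in\overline{\omega_\delta}}H(\xi)$ satisfies $h_\delta<1$, whence $\rme^{-H(\xi)/\e}\ge \rme^{-h_\delta/\e}$ for $\xi\in\omega_\delta$. Combining this with the crude bound $Z_\e=\int_{-1}^1\rme^{-H/\e}\,\d\xi\le 2$ and restricting the integral above to $\Omega\times\omega_\delta$, I would obtain
\[
\int_\Omega\int_{\omega_\delta}|\partial_\xi u_\e|^2\,\d\xi\,\d x\le \frac{2\,C\,\Ld(\T)}{\tau_\e\,\rme^{-h_\delta/\e}}.
\]

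Then I would substitute $\tau_\e=\e\,\rme^{1/\e}$ from~\eqref{def:tau_e}, which is exactly where the choice of time scale pays off: the denominator equals $\tau_\e\,\rme^{-h_\delta/\e}=\e\,\rme^{(1-h_\delta)/\e}$ and grows exponentially because $1-h_\delta>0$. Hence
\[
\|\partial_\xi u_\e\|_{L^2(\T\times\omega_\delta)}^2\le \frac{2\,C\,\Ld(\T)}{\e}\,\rme^{-(1-h_\delta)/\e}\to 0\quad\text{as }\e\to0,
\]
the exponential decay overwhelming the $1/\e$ prefactor, which is precisely~\eqref{boundpartialxi}.

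I expect no serious obstacle here: the only point requiring a little care is the strict inequality $h_\delta<1$, which rests on $0$ being the unique maximizer of $H$, so that its maximum over the compact set $\overline{\omega_\delta}$ lying away from $0$ is strictly below the global value $1$. Everything else is a direct estimate; in particular the trivial bound $Z_\e\le 2$ already suffices, so the finer Laplace asymptotics $Z_\e\sim\sqrt{2\pi\e/H''(1)}$ play no role at this step.
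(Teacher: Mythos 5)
Your proof is correct and follows essentially the same route as the paper's: isolate the $\tau_\e\int(\partial_\xi u_\e)^2\,\d\gamma_\e$ part of $\a_\e$, bound the Gibbs weight from below on $\omega_\delta$ by $\rme^{-h_\delta/\e}$ with $h_\delta=\sup_{\omega_\delta}H<1$, and let $\tau_\e=\e\,\rme^{1/\e}$ produce the decaying factor $\e^{-1}\rme^{(h_\delta-1)/\e}$. The only cosmetic difference is that you replace $Z_\e$ by the crude bound $Z_\e\le 2$, whereas the paper carries $Z_\e$ along explicitly; both suffice.
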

\begin{proof}
  We observe that 
\[
\tau_\e \int_\sDom (\partial_\xi u_\e)^2\, \d\gamma_\e\le
\a_\e(\rho_\e)\le C < \infty.
\]
If $\ds h_\delta=\sup_{\xi\in \omega_\delta}H(\xi)<1$,
then $\ds \inf_{\xi\in \omega_\delta} e^{-H(\xi)/\e}=e^{-h_\delta/ \e},$ and we find
\[
\int_{\Omega \times \omega_\delta} (\partial_\xi u_\e)^2\, \d x\,\d\xi\leq
C\frac{Z_\e}{\tau_\e}e^{\frac{h_\delta}{\e}}=
C\frac{Z_\e}{\e}e^{\frac{h_\delta-1}\e}.
\]
Taking the limit as $\e\to 0$ we obtain \pref{boundpartialxi}.
\end{proof}
\begin{lemma}[Convergence of traces]
\label{le:1}
Let us suppose that $\rho_\e=u_\e\gamma_\e\weakto \rho=u\gamma$
with $\a_\e(\rho_\e)\le C<+\infty$ and let
$u_{\e}^\pm(x)$ be the traces of $u_\e$ at $\xi=\pm 1$. Then as $\e\downarrow0$
\begin{equation}\label{eq:b:11}
  u_{\e}^\pm\to u^\pm\quad \text{strongly in $L^2(\Omega)$,}
\end{equation}
where $u^\pm$ are the functions given by \eqref{eq:51}.
\end{lemma}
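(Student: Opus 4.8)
The plan is to obtain the strong convergence \eqref{eq:b:11} from two separate facts: \emph{(i)} the family of traces $\{u_\e^\pm\}_\e$ is precompact in $L^2(\Omega)$, and \emph{(ii)} every $L^2$-limit point of $u_\e^\pm$ coincides with $u^\pm$. Once both are in hand, a routine subsequence argument (every subsequence admits a sub-subsequence converging in $L^2$ to the \emph{same} limit $u^\pm$) upgrades this to convergence of the whole family. The single technical device underlying everything is a \emph{flattening estimate}: the previous lemma forces $u_\e$ to become essentially independent of $\xi$ on the wells $\omega_\delta$, so that $u_\e(\cdot,\xi)$ stays close to its trace there.

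First I would make the flattening precise. For $\delta\in(0,1)$, $\xi\in(\delta,1)$ and a.e.\ $x$ one writes $u_\e^+(x)-u_\e(x,\xi)=\int_\xi^1\partial_\eta u_\e(x,\eta)\,\d\eta$, whence by Cauchy--Schwarz and integration in $x$,
\[
\sup_{\xi\in(\delta,1)}\int_\Omega |u_\e^+(x)-u_\e(x,\xi)|^2\,\d\lambdaO
\le (1-\delta)\,\frac{1}{\Ld(\Omega)}\int_{\Omega\times(\delta,1)}(\partial_\eta u_\e)^2\,\d x\,\d\eta =: \theta_\e .
\]
By \eqref{boundpartialxi} (applied on $\omega_\delta\supset(\delta,1)$) we have $\theta_\e\to0$, and symmetrically near $\xi=-1$. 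Here $u_\e\in W^{1,2}(\Omega\times(\delta,1))$ with respect to Lebesgue measure, since $e^{-H/\e}$ is bounded below on $\omega_\delta$; hence the traces $u_\e^\pm\in L^2(\Omega)$ and the fundamental theorem of calculus used above are legitimate.

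For the precompactness I would \emph{not} try to bound $\nnabla_x u_\e^+$ directly, since the trace of the $x$-gradient is not controlled by $\a_\e(\rho_\e)$, but instead pass through the $\xi$-average
\[
\bar u_\e^+:=\frac{1}{\tilde\gamma_\e((\delta,1))}\int_\delta^1 u_\e(\cdot,\xi)\,\d\tilde\gamma_\e(\xi)\in L^2(\Omega),
\qquad \tilde\gamma_\e((\delta,1))\to\tfrac12 .
\]
Jensen's inequality together with $\a_\e(\rho_\e)\le C$ bounds $\int_\Omega|\nnabla_x\bar u_\e^+|^2\,\d\lambdaO$ uniformly, while the weak-$*$ convergence of $\rho_\e$ forces $\sup_\e\|\rho_\e\|_{\M(\cDom)}=\sup_\e\int_\Dom|u_\e|\,\d\gamma_\e<+\infty$, which in turn bounds $\bar u_\e^+$ in $L^1(\Omega)$; a Poincaré--Wirtinger inequality then produces a uniform $W^{1,2}(\Omega)$ bound, so that $\{\bar u_\e^+\}$ is precompact in $L^2(\Omega)$ by Rellich. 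Since the flattening estimate gives $\|u_\e^+-\bar u_\e^+\|_{L^2(\Omega)}\le\theta_\e^{1/2}\to0$, the traces $u_\e^+$ are precompact in $L^2(\Omega)$ as well.

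It then remains to identify the limit. Testing $\rho_\e\weakto\rho$ against $\phi(x,\xi)=\psi(x)\chi(\xi)$, with $\psi\in C^0(\oOmega)$ and $\chi\in C^0([-1,1])$ supported in $[\delta,1]$ and $\chi(1)=1$, the right-hand side equals $\tfrac12\int_\Omega\psi\,u^+\,\d\lambdaO$ since $\phi(\cdot,-1)\equiv0$. On the left, flattening replaces $u_\e$ by $u_\e^+$ up to an $L^2$-error of size $\theta_\e^{1/2}$, and $\int_\delta^1\chi\,\d\tilde\gamma_\e\to\tfrac12$ because $\tilde\gamma_\e\weakto\tilde\gamma$; so if $u_\e^+\to w$ in $L^2(\Omega)$ along a subsequence, the left-hand side tends to $\tfrac12\int_\Omega\psi\,w\,\d\lambdaO$. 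Equating and letting $\psi$ vary gives $w=u^+$, the $\xi=-1$ side being symmetric. The step I expect to be the main obstacle is precisely the precompactness: one cannot bound $\nnabla_x u_\e^+$ from $\a_\e(\rho_\e)$ alone, and the hypotheses carry no a priori $L^2$-bound on $u_\e$; the resolution is to work with the averaged trace $\bar u_\e^+$ and to extract the missing $L^2$-information from the $L^1$-bound supplied by the weak-$*$ boundedness of $\rho_\e$ together with Poincaré's inequality.
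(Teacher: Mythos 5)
Your proof is correct and follows essentially the same route as the paper: the same flattening estimate derived from \eqref{boundpartialxi}, a $\xi$-averaged trace whose $x$-gradient is controlled by $\a_\e(\rho_\e)$ via Jensen's inequality, Rellich compactness, and identification of the limit by testing the weak-$*$ convergence of $\rho_\e$ against products of continuous functions. The differences are cosmetic: you use a sharp cutoff where the paper uses a continuous one, you run compactness before identification rather than after, and you make explicit (via the $L^1$-bound from norm-boundedness of $\rho_\e$ plus Poincar\'e) the $L^2$-bound needed for Rellich, a point the paper leaves implicit.
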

\begin{proof}
  Let us consider, e.g., the case of $u_\e^-$. Let us fix $\delta\in (0,1)$; by \pref{boundpartialxi} and standard trace results in $W^{1,2}(-1,-1+\delta)$ we know that
\begin{equation}\label{eq:b:14}
  \lim_{\e\downarrow0} \int_\T
  \omega^2_\e(x)\,d\Ld=0\quad
    \text{where}\quad
    \omega^2_\e(x):=\!\!\!\!\sup_{-1\le\xi\le -1+\delta}\!|u_\e(x,\xi)-
    u_\e^-(x)|^2\le \delta \!\int_{-1}^{-1+\delta} \hspace{-0.3cm} |\partial_\xi u_\e(x,\xi)|^2\d\xi.
\end{equation}
Let us fix a function $\phi\in C^0(\T)$ and a function $\psi\in C^0[-1,1]$ with $0\le \psi \le 1$, $\psi(-1)=1$, ${\rm supp}\,\psi\subset [-1,-1+\delta]$; we set 
$$J_\e:=\int_{-1}^1 \psi(\xi)\,\d\tilde\gamma_\e(\xi),\quad
\tilde u_\e(x):=J_\e^{-1}\int_{-1}^1 u_\e(x,\xi)\psi(\xi)\,d\tilde\gamma_\e(\xi),$$
where $\tilde \gamma_\e$ is the measure defined in \pref{eq:28}.
Note that
$$ \lim_{\e \to 0}J_\e = \left<\psi,\gamma \right>= \frac 12 \psi(-1)+ \frac 12 \psi(1)= \frac 12.$$
Since $\rho_\e$ weakly converge to $\rho$ we know that
\[
\lim_{\e\downarrow0} \int_\Omega \phi(x)\tilde u_\e(x)\,\d\lambdaO=
\lim_{\e\downarrow0}J^{-1}_\e\int_\Omega \phi(x) \psi(\xi)
u_\e(x,\xi)\,\d\gamma_\e(x,\xi)=\int_\Omega \phi(x)u^-(x)\,\d\lambdaO
  \]
  so that $\tilde u_\e$ converges to $u^-$ in the duality with bounded
  continuous functions. On the other hand,
  \begin{displaymath}
    \int_\Omega |\nnabla_x\tilde u_\e(x)|^2\,\d\lambdaO\le
    J_\e^{-1} \int_\Omega \int_{-1}^1 |\nnabla_x
    u_\e(x,\xi)|^2\psi(\xi)\,\d\tilde\gamma(\xi)\,\d\lambdaO(x)\le
    J_\e^{-1}\a_\e(\rho_\e)\le 2C
  \end{displaymath}
  so that $\tilde u_\e\to u^-$ in $L^2(\Omega)$ by Rellich compactness
  theorem.
  
  On the other hand, thanks to \eqref{eq:b:14}, we have
  \begin{align*}
    &\lim_{\e\downarrow0}
    \int_\Omega \Big|u_\e^-(x)-\tilde u_\e(x)\Big|^2\,\d\lambdaO(x)=
    \lim_{\e\downarrow0}J_\e^{-2}\int_\Omega
    \Big|\int_{-1}^1 \psi(\xi)
    \big(u_\e(x,\xi)-u^-(x)\big)\,\d\tilde\gamma_\e(\xi)
    \Big|^2\,\d\lambdaO(x)\\
     &\qquad\le \lim_{\e\downarrow0}
     \int_\Dom \psi(\xi)\omega_\e^2(x)\,\d\gamma_\e(x,\xi)
     =0,
  \end{align*}
  which yields \eqref{eq:b:11}.
  \drop{  \begin{align*}
    &\lim_{\e\downarrow0} 
    \Big|\int_\Omega\phi(x) \psi(\xi)
    u_\e(x,\xi)\,\d\gamma_\e(x,\xi)-
    \int_\Omega \phi(x) \psi(\xi)
     u^-_\e(x)\,\d\gamma_\e(x,\xi)\Big|\\
     &\qquad\le \lim_{\e\downarrow0}
     \int_\Omega\phi(x) \psi(\xi)\Big|
    u_\e(x,\xi)-u^-_\e(x)\Big|\,\d\gamma_\e(x,\xi)
    \le \lim_{\e\downarrow0}\,
    J_\e \int_\Omega\phi(x)\omega_\e(x)\,d\Ld(x)=0.
  \end{align*}
It follows that
  \begin{align*}
    \lim_{\e\downarrow0}
    \int_\T\phi(x)u^-_\e(x)\,\d\Ld&=
    \lim_{\e\downarrow0}J_\e^{-1}
     \int_\Omega \phi(x) \psi(\xi)
     u^-_\e(x)\,\d\gamma_\e(x,\xi)=
     \int_\T\phi(x)u^-(x)\,\d x.
  \end{align*}	}
\end{proof}
\begin{remark}
  \label{rem:after}
  \upshape
  A completely analogous argument shows that if $\rho_\e$ satisfies a
  $W^{1,1}(\Dom;\gamma_\e)$-uniform bound
  \begin{equation}
    \label{eq:65bis}
    \int_\Dom
    \|\nnabla_{x,\xi}u_\e\|_{\e}\,\d\gamma_\e(x,\xi)\le C<+\infty
  \end{equation}
  instead of $\a_\e(\rho_\e)\le C$, then $u_\e^\pm\to u^\pm$ in $L^1(\Omega)$.
\end{remark}
\subsection{Asymptotics for the minimal transition cost.}
\label{subsec:minimal_cost}
Given $(\varphi^-,\varphi^+)\in \R^2$ let us set
\begin{equation}
  \label{eq:b:6}
  K_\e(\varphi^-,\varphi^+):= \min\Big\{ \tau_\e\int_{-1}^1
  \big(\varphi'(\xi)\big)^2\,d\tilde\gamma_\e: \varphi\in
  W^{1,2}(-1,1),\ \varphi(\pm 1)=\varphi^\pm\Big\}
\end{equation}
It is immediate to check that $K_\e$ is a quadratic form
depending only on $\varphi^+-\varphi^-$, i.e.
\begin{equation}
  \label{eq:38}
  K_\e(\varphi^-,\varphi^+)=k_\e(\varphi^+-\varphi^-)^2,\quad
  k_\e=K_\e(-1/2,1/2).   
\end{equation}
We call $\mathcal T_\e(\varphi^-,\varphi^+)$ the solution of the minimum
problem \eqref{eq:b:6}: it admits the simple representation
\begin{equation}
  \label{eq:39}
  \mathcal T_\e(\varphi^-,\varphi^+)=\frac12(\varphi^-+\varphi^+)+(\varphi^+-\varphi^-)\phi_\e
\end{equation}
where $\phi_\e=\mathcal T_\e(-1/2,1/2)$.
We also set
\begin{equation}
  \label{eq:42}
   Q_\e(\varphi^-,\varphi^+):=\int_{-1}^1 \big(\mathcal
    T_\e(\varphi^-,\varphi^+)\big)^2\,\d\tilde\gamma_\e=
    \frac12\big((\varphi^-)^2+(\varphi^+)^2\big)+(q_\e-\tfrac14)(\varphi^+-\varphi^-)^2
\end{equation}
where
\begin{equation}
  \label{eq:43}
  q_\e:=\int_{-1}^1 |\phi_\e(\xi)|^2\,\d\tilde\gamma_\e(\xi)=Q_\e(-1/2,1/2).
\end{equation}
\begin{lemma}
  We have
  \begin{equation}
    \label{eq:40}
    \lim_{\e\downarrow0}k_\e=\frac k2=\frac {\sqrt{-H''(0)\,H''(1)} }{2\pi},
  \end{equation}
  and
  \begin{equation}
    \label{eq:41}
    \lim_{\e\downarrow0}q_\e=\frac 14\quad\text{so that}\quad
    \lim_{\e\downarrow0} Q_\e(\varphi^-,\varphi^+)=
    \frac 12 (\varphi^-)^2+\frac 12(\varphi^+)^2.
  \end{equation}
\end{lemma}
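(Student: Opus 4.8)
The plan is to exploit the explicit one-dimensional structure of the minimization problem~\eqref{eq:b:6} and then apply Laplace's method. First I would solve the Euler--Lagrange equation for the minimizer: since the integrand is $\tau_\e(\varphi')^2$ weighted by $\d\tilde\gamma_\e\propto e^{-H/\e}\,\d\xi$, any minimizer satisfies $\frac{\d}{\d\xi}\big(e^{-H/\e}\varphi'\big)=0$, whence $\varphi'(\xi)=c_\e\,e^{H(\xi)/\e}$ with the constant $c_\e$ fixed by the boundary data. For $\varphi(\pm1)=\pm\tfrac12$ this gives $c_\e=\big(\int_{-1}^1 e^{H/\e}\,\d\xi\big)^{-1}$, and substituting back yields the closed form
\[
k_\e=\frac{\tau_\e}{Z_\e\,\int_{-1}^1 e^{H(\xi)/\e}\,\d\xi}.
\]

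The second step is to estimate the two integrals asymptotically. Since $H$ attains its minimum value $0$ only at $\xi=\pm1$, with $H'(\pm1)=0$ and (by evenness) $H''(\pm1)=H''(1)>0$, each endpoint contributes a half-Gaussian, so that $Z_\e=\int_{-1}^1 e^{-H/\e}\,\d\xi\sim\sqrt{2\pi\e/H''(1)}$ as $\e\to0$. Symmetrically, $H$ attains its maximum value $1$ only at the interior point $\xi=0$, with $H'(0)=0$ and $H''(0)<0$, so a full Gaussian gives $\int_{-1}^1 e^{H/\e}\,\d\xi\sim e^{1/\e}\sqrt{2\pi\e/|H''(0)|}$. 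Inserting these together with $\tau_\e=\e\,e^{1/\e}$ into the closed form for $k_\e$, the factors $e^{1/\e}$ and $\e$ cancel and one is left with $k_\e\to\frac1{2\pi}\sqrt{|H''(0)|\,H''(1)}=k/2$, which is~\eqref{eq:40}.

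For~\eqref{eq:41} I would argue by concentration. The explicit minimizer $\phi_\e$ is monotone nondecreasing, takes values in $[-\tfrac12,\tfrac12]$, and satisfies $\phi_\e(\xi)=-\tfrac12+c_\e\int_{-1}^\xi e^{H/\e}\,\d t$. Because the mass of $e^{H/\e}$ concentrates at $\xi=0$, for every $\delta\in(0,1)$ one has $\phi_\e\to-\tfrac12$ uniformly on $[-1,-\delta]$ and $\phi_\e\to+\tfrac12$ uniformly on $[\delta,1]$. On the other hand $\tilde\gamma_\e\weakto\tfrac12(\delta_{-1}+\delta_{1})$ and assigns vanishing mass to $(-\delta,\delta)$; splitting $q_\e=\int_{-1}^1|\phi_\e|^2\,\d\tilde\gamma_\e$ over $[-1,-\delta]$, $(-\delta,\delta)$ and $[\delta,1]$, and using the uniform bound $|\phi_\e|\le\tfrac12$ on the middle piece, gives $q_\e\to\tfrac12\cdot\tfrac14+\tfrac12\cdot\tfrac14=\tfrac14$. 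The stated limit of $Q_\e$ then follows directly from the identity~\eqref{eq:42}.

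The main obstacle is the rigorous justification of the two Laplace asymptotics: one must control the non-quadratic remainder of $H$ near each critical point and, crucially, distinguish the half-Gaussian contributions at the endpoints $\xi=\pm1$ (where only one side of the well lies inside $[-1,1]$) from the full Gaussian at the interior maximum $\xi=0$. This asymmetry is precisely what produces the factor $2\pi$ rather than $\pi$ in the denominator, so the half-well geometry enters the final constant and must be tracked with care.
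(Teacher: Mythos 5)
Your proposal is correct and takes essentially the same route as the paper's proof: you solve the Euler--Lagrange equation $(e^{-H/\e}\varphi')'=0$ explicitly to get $k_\e=\tau_\e/(Z_\e I_\e)$ with $I_\e=\int_{-1}^1 e^{H(\xi)/\e}\,\d\xi$, evaluate both integrals by Laplace's method (half-Gaussians at $\xi=\pm1$, full Gaussian at the interior maximum $\xi=0$), and deduce $q_\e\to\tfrac14$ from the locally uniform convergence $\phi_\e\to\tfrac12\,\mathrm{sign}(\xi)$ away from $0$ combined with the bound $|\phi_\e|\le\tfrac12$ and $\tilde\gamma_\e\weakto\tfrac12(\delta_{-1}+\delta_1)$. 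The only cosmetic difference is that your explicit splitting of $q_\e$ over $[-1,-\delta]$, $(-\delta,\delta)$ and $[\delta,1]$ spells out what the paper leaves implicit.
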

\begin{proof}
  $\phi_\e$ solves the Euler equation
  \begin{equation}
    \label{eq:euler}
    \big(e^{-H(\xi)/\e}\phi_\e'(\xi)\big)'=0\quad\text{on
    }(-1,1),\quad
    \phi_\e(\pm 1)=\pm \tfrac 12.
  \end{equation}
  We can compute an explicit solution of \pref{eq:euler} by
  integration:
  \begin{align*}
    \phi_\e'(\xi) = Ce^{H(\xi)/\e},\qquad \phi_\e(\xi) = C' +
    C\int_0^\xi e^{H(\eta)/\e}\, \d\eta.
  \end{align*}
  Define $I_\e:=\int_{-1}^1e^{H(\xi)/\e}\, \d\xi$. The boundary
  conditions for $\xi=\pm1$ give
  \[
  C' =0, \quad C\int_{-1}^1e^{H(\xi)/\e}\, d\xi = C I_\e=1
  \]
  It follows that
  \[
  \phi_\e(\xi) = I_\e^{-1} \int_0^\xi e^{H(\eta)/\e}\, \d\eta,
  \]
  and
  \begin{align*}
    k_\e=\tau_\e I_\e^{-2}\int_{-1}^1
    e^{2H(\xi)/\e}\,\d\tilde\gamma_\e(\xi)= \tau_\e Z^{-1}I_\e^{-1}.
  \end{align*}
  We compute, using Laplace's method:
  $$ I_\e  =  \sqrt{\frac{2\pi\e}{|H''(0)|}}e^{1/\e}(1+o(1))\quad \mbox{and}\quad Z_\e  = \sqrt{\frac{2\pi\e}{H''(1)}}(1+o(1)),\quad \mbox{as } \e \to 0,$$ 
  thus obtaining \eqref{eq:40}.
   Since
  \[
  \phi'_\e=I_\e^{-1}e^{H/\e}\to \delta_0\quad \mbox{ in }\mathscr{D}'(-1,1)
  \]
  and $H$ is even, we have
  \[
  \phi_\e(\xi)=
  I_\e^{-1} \int_0^\xi e^{H(\eta)/\e}\, \d\eta\to \frac12{\rm
    sign}(\xi)
  \]
  uniformly on each compact subset of $[-1,1]$ not containing
  $0$. Since the range of $\phi_\e$ belongs to $[-1/2,1/2]$ and
  $\tilde\gamma_\e\weakto \frac12\delta_{-1}+\frac 12\delta_{+1}$ we
  obtain \eqref{eq:41}.
\end{proof}
\subsection{End of the proof of Theorem \ref{thm:Gamma}.}
The second limit of \eqref{eq:35} follows by general
lower semicontinuity results on integral functionals of measures,
see e.g.\ \cite[Lemma 9.4.3]{AmbrosioGigliSavare05}.

Concerning the first ``$\liminf$'' inequality,
we split the the quadratic form $\a_\e$ in the sum of two parts,
\begin{equation}
  \label{eq:42bis}
  \a_\e^1(\rho_\e):=\int_\sDom |\nnabla_x
  u_\e(x,\xi)|^2\,\d\gamma_\e(x,\xi),\quad
  \a_\e^2(\rho_\e):=\tau_\e\int_\sDom(\partial_\xi u_\e)^2\,\d\gamma_\e(x,\xi).
\end{equation}
We choose a smooth cutoff function $\eta^-:[-1,1]\to [0,1]$ such that
$\eta^-(-1)=1$ and $\mathrm{supp}(\eta^-)\subset [-1,-1/2]$ and the
symmetric
one $\eta^+(\xi):=\eta(-\xi)$.
We also set
\begin{equation}
  \label{eq:37}
  u_\e^-(x):=\int_{-1}^1\eta^-(\xi)u_\e(x,\xi)\,\d\tilde\gamma_\e(\xi),\quad
  u_\e^+(x):=\int_{-1}^1\eta^+(\xi)u_\e(x,\xi)\,\d\tilde\gamma_\e(\xi),\quad
\end{equation}
and it is easy to check that
\begin{equation}
  \label{eq:43bis}
  u_{\e}^\pm\weakto \frac 12 u^\pm\quad\text{in }\mathscr D'(\Omega).
\end{equation}
We also set $\theta_\e:=\int_{-1}^1
\eta^+(\xi)\,\d\tilde\gamma_\e(\xi)$ $\big(= \int_{-1}^1\eta^-(\xi)\,\d\tilde\gamma_\e(\xi)\big)$, observing that $\theta_\e\to
1/2$.
We then have by Jensen inequality
\begin{align*}
  \a_\e^1(\rho_\e)&\ge \int_\Omega \int_{-1}^1
  (\eta^-(\xi)+\eta^+(\xi))|\nabla_x
  u_\e|^2\,\d\tilde\gamma_\e(\xi) \,\d \lambdaO\ge \theta_\e^{-1}
  \int_\Omega |\nabla u_\e^-|^2+|\nabla u_\e^+|^2\,\d \lambdaO
\end{align*}
and, passing to the limit,
\begin{displaymath}
  \liminf_{\e\downarrow0}\a_\e^1(\rho_\e)\ge
  \frac 12\int_\Omega |\nabla u^{-}|^2+|\nabla u^{+}|^2\,\d \lambdaO
\end{displaymath}
Let us now consider the behaviour of $\a^2_\e$: applying
\eqref{eq:b:6} and \eqref{eq:38} we get
\begin{align*}
  \a^2_\e(\rho_\e)&=\int_\Omega \left(\tau_\e \int_{-1}^1 (\partial_\xi
  u_\e(x,\xi))^2\,\d\tilde\gamma_\e(\xi)\right)\,\d \lambdaO\ge \int_\Omega
k_\e(u_\e^-(x)-u_\e^+(x))^2\,\d \lambdaO
\end{align*}
so that by \eqref{eq:40} and \eqref{eq:b:11} we obtain
\begin{equation}
  \label{eq:44bis}
  \liminf_{\e\downarrow0}\a^2_\e(\rho_\e)\ge \frac k2\int_\Omega
  \big(u^-(x)-u^+(x)\big)^2\,\d\lambdaO.
\end{equation}
In order to prove the ``$\limsup$'' inequality \eqref{eq:36} we fix
$\rho=u\gamma$ with $u$ in the domain of  the quadratic forms $a$ and
$b$ so that $u^\pm=u(\cdot,\pm1)$ belong to $W^{1,2}(\Omega)$, and we
set
$\rho_\e=u_\e\gamma_\e$ where $u_\e(x,\cdot)=\mathcal
T_\e(u^-(x),u^+(x))$ as in \eqref{eq:39}.
We easily have by \eqref{eq:41} and the Lebesgue dominated convergence
theorem 
\begin{align*}
  \lim_{\e\downarrow0}\b_\e(\rho_\e)&=
  \lim_{\e\downarrow0}\int_\Omega Q_\e(u^-(x),u^+(x))\,\d\lambdaO=
  \int_\Omega \Big(\frac12 |u^-(x)|^2+\frac 12|u^+(x)|^2\Big)\,\d\lambdaO=
  \b(\rho).
\end{align*}
Similarly, since for every $j=1,\cdots, d$ and almost every $x\in
\Omega$
$$\partial_{x_j} u_\e(x,\xi)=\mathcal
T(\partial_{x_j}u^-(x),\partial_{x_j} u^+),$$ 
we have
\begin{align*}
  \lim_{\e\downarrow0}\a_\e(\rho_\e)&=
  \lim_{\e\downarrow0}\int_\Omega
  \Big(\sum_{j=1}^d  Q_\e\big(\partial_{x_j}u^-(x),\partial_{x_j} u^+(x)\big)+K_\e\big(u^-(x),u^+(x)\big)\Big)\,\d\lambdaO=\\&=
  \int_\Omega \Big(\frac12 |\nabla u^-(x)|^2+\frac 12|\nabla
  u^+(x)|^2+\frac k2\big(u^-(x)-u^+(x)\big)\Big)\,\d\lambdaO=
  \a(\rho).
\end{align*}


\section{From \texorpdfstring{$\Gamma$}{Gamma}-convergence to convergence of the evolution
  problems: proof of Theorems 1 and 2.}

\label{sec:proof}
Having at our disposal the $\Gamma$-convergence result of Theorem
\ref{thm:Gamma} and its Corollary \ref{cor:Gamma}
it is not difficult to pass to the limit in the integrated equation
\eqref{eq:25}.

Let us first notice that the quadratic forms $\b_\e$ satisfy a
uniform coercivity condition:
\begin{lemma}[Uniform coercivity of $\b_\e$]
  \label{le:coercivity}
  Every family of measures $\rho_\e\in \M(\cDom)$, $\e>0$
  satisfying
    \begin{equation}
      \label{eq:9}
      \limsup_{\e>0}\b_\e(\rho_\e)<+\infty
    \end{equation}
    is bounded in $\M(\cDom)$ and admits a weakly-$*$ converging subsequence.
  \end{lemma}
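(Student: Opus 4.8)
The plan is to deduce the total-variation bound directly from the quadratic form $\b_\e$, and then to invoke sequential weak-$*$ compactness in the dual of a separable space. First I would observe that finiteness of $\b_\e(\rho_\e)$ forces $\rho_\e\ll\gamma_\e$, so that we may write $\rho_\e=u_\e\gamma_\e$ with $u_\e=\d\rho_\e/\d\gamma_\e\in H_\e$ and $\b_\e(\rho_\e)=\int_D u_\e^2\,\d\gamma_\e$.

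The key step is a one-line Cauchy--Schwarz estimate. Since $\gamma_\e$ is a \emph{probability} measure on $\cDom$ (recall $\gamma_\e(\cDom)=1$ for every $\e$), the total variation of $\rho_\e$ satisfies
\[
|\rho_\e|(\cDom)=\int_D|u_\e|\,\d\gamma_\e\le
\Big(\int_D u_\e^2\,\d\gamma_\e\Big)^{1/2}\big(\gamma_\e(\cDom)\big)^{1/2}
=\b_\e(\rho_\e)^{1/2}.
\]
Hence \eqref{eq:9} immediately yields $\sup_\e|\rho_\e|(\cDom)<+\infty$, i.e.\ the family $\{\rho_\e\}$ is bounded in $\M(\cDom)$.

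Finally, since $\cDom$ is a compact metric space, $\Cb(\cDom)$ is a separable Banach space and $\M(\cDom)=\big(\Cb(\cDom)\big)'$ is its dual. Bounded subsets of such a dual are weakly-$*$ sequentially precompact (Banach--Alaoglu together with the metrizability of norm-bounded sets in the weak-$*$ topology), so from any such bounded family we can extract a subsequence converging weakly-$*$ in $\M(\cDom)$, which proves the claim.

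There is essentially no serious analytic obstacle here; the single point that must be made with care is that the constant in the Cauchy--Schwarz estimate is \emph{independent of $\e$}, which is exactly the content of the normalization $\gamma_\e(\cDom)=1$. Were the reference measures $\gamma_\e$ not uniformly finite, this passage would fail, and it is precisely this uniform finiteness that renders $\b_\e$ coercive with respect to the weak-$*$ topology.
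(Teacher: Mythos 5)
Your proof is correct and follows essentially the same route as the paper's: the paper likewise deduces $|\rho_\e|(\cDom)\le\bigl(\b_\e(\rho_\e)\bigr)^{1/2}$ from the fact that $\gamma_\e$ is a probability measure (your Cauchy--Schwarz step, stated there without writing it out) and then invokes relative weak-$*$ compactness of bounded sets in duals of separable Banach spaces. Your version merely makes explicit the intermediate details (the absolute continuity $\rho_\e\ll\gamma_\e$, the Cauchy--Schwarz computation, and the separability of $\Cb(\cDom)$ justifying sequential extraction), all of which are sound.
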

  \begin{proof}
    It follows immediately by the fact that $\gamma_\e$ is a
    probability measure and therefore
    \begin{displaymath}
      |\rho_\e|(\cDom)\le \Big(\b_\e(\rho_\e)\Big)^{1/2}.
    \end{displaymath}
    \eqref{eq:9} thus implies that the total mass of $\rho_\e$ is
    uniformly bounded and we can apply the relative weak-$*$ compactness of
    bounded sets in 
    dual Banach spaces.
  \end{proof}
  The proof of Theorems \ref{th:1} and \ref{th:2} is a
  consequence of the following general result:
  \begin{theorem}[Convergence of evolution problems]
    \label{thm:evolconv}
    Let us consider weakly-$*$ lower-semi\-conti\-nuous, nonnegative
    and extended-valued
    quadratic forms $\a_\e,\b_\e, \a,\b$ defined on
    $\M(\cDom)$ and let us suppose that
    \begin{enumerate}[\bf 1)]
    \item \textbf{Non degeneracy of the limit forms:}
      $\b$ is non degenerate (i.e.~$\b(\rho)=0\
      \Rightarrow\ \rho=0$) and $\Domain(\a)$ is dense in
      $\Domain(\b)$ with respect to the norm-convergence induced by $\b$.      
    \item \textbf{Uniform coercivity:} $\b_\e$ satisfy the coercivity property stated in the previous
      Lemma \ref{le:coercivity}.
    \item \textbf{Joint $\Gamma$-convergence:}
      $\q_\e^\kappa:=\b_\e+\kappa\,\a_\e$ satisfy the joint $\Gamma$-convergence property
      \eqref{eq:6}
      \begin{equation}
        \label{eq:10}
        \Gamma\big(\M(\cDom)\big)\text{-}\lim_{\e\downarrow0}\q_\e^\kappa=
        \q^\kappa=\b+\kappa\,\a\quad\text{for every }\kappa>0.
      \end{equation}
    \end{enumerate}
    Let $\rho_\e(t)$, $t\ge0$, be the solution of the evolution problem
    \eqref{eq:25} starting from $\rho_\e^0\in \mathsf{Dom}(\b_\e)$.

    If
    \begin{equation}
      \label{eq:2}
      \text{$\rho^0_\e\weakto\rho^0$
        in $\M(\cDom)$ as $\e\downarrow0$\quad
        with}\quad
      \limsup_{\e\downarrow0}\b_\e(\rho_\e^0)<+\infty  
    \end{equation}
    then $\rho_\e(t)\weakto\rho(t)$ in $\M(\cDom)$ as $\e\downarrow0$
    for every $t>0$ and $\rho(t)$ is the solution of the limit
    evolution problem \eqref{eq:25_0}.

    If moreover $\lim_{\e\downarrow0}\b_\e(\rho_\e^0)=\b(\rho_0)$ then
    \begin{equation}
      \label{eq:3}
      \lim_{\e\downarrow0}\b_\e(\rho_\e(t))=\b(\rho(t)),\quad
      \lim_{\e\downarrow0}\a_\e(\rho_\e(t))=\a(\rho(t))\quad
      \text{for every }t>0.
    \end{equation}
  \end{theorem}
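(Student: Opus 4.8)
The plan is to transfer the $\Gamma$-convergence information of hypothesis~\textbf{3)} into the evolution by passing to the limit in the integrated equation~\eqref{eq:25}, using the weak--strong principle of Corollary~\ref{cor:Gamma}, with the uniform a priori estimates~\eqref{eq:26}--\eqref{eq:27} supplying compactness. Set $C_0:=\limsup_\e\b_\e(\rho_\e^0)<\infty$ (finite by~\eqref{eq:2}). Feeding this into~\eqref{eq:27} gives, for every $t>0$ and uniformly in $\e$, the bounds $\b_\e(\rho_\e(t))\le 2C_0$, $\a_\e(\rho_\e(t))\le C_0/t$, $\b_\e(\partial_t\rho_\e(t))\le C_0/t^2$, while~\eqref{eq:26} yields $\int_0^t\a_\e(\rho_\e(r))\,\d r\le C_0$. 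By the uniform coercivity~\textbf{2)} the masses $|\rho_\e(t)|(\cDom)\le(2C_0)^{1/2}$ are bounded; moreover for $\phi\in\Cb(\cDom)$ the derivative of $t\mapsto\langle\phi,\rho_\e(t)\rangle$ equals $\b_\e(\partial_t\rho_\e(t),\phi\,\gamma_\e)$ and is bounded by $C_0^{1/2}\|\phi\|_\infty/t$, so these scalar maps are locally equicontinuous on $(0,\infty)$. As bounded subsets of $\M(\cDom)$ are weakly-$*$ metrizable, Arzel\`a--Ascoli and a diagonal argument over a countable dense family of test functions produce a subsequence and a curve $\rho(\cdot)$ with $\rho_\e(t)\weakto\rho(t)$ for \emph{every} $t>0$. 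The $\Gamma$-liminf inequalities~\eqref{eq:35} (available from~\eqref{eq:10} through Lemma~\ref{lemma:equivalence}) then give $\b(\rho(t))\le 2C_0$ and $\a(\rho(t))\le C_0/t$, so $\rho(t)\in\Domain(\a)$ for $t>0$.

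Next I fix $\sigma\in\Domain(\a)$ and, via~\eqref{eq:36}, a joint recovery family $\sigma_\e\weakto\sigma$ with $\a_\e(\sigma_\e)\to\a(\sigma)$ and $\b_\e(\sigma_\e)\to\b(\sigma)$, and pass to the limit in~\eqref{eq:25} tested with $\sigma_\e$. For each fixed $t>0$ the family $\rho_\e(t)$ satisfies~\eqref{eq:8}, so Corollary~\ref{cor:Gamma} gives $\b_\e(\rho_\e(t),\sigma_\e)\to\b(\rho(t),\sigma)$ and, for every $r>0$, $\a_\e(\rho_\e(r),\sigma_\e)\to\a(\rho(r),\sigma)$; the bound $|\a_\e(\rho_\e(r),\sigma_\e)|\le\a_\e(\rho_\e(r))^{1/2}\a_\e(\sigma_\e)^{1/2}\le M\,r^{-1/2}$ provides an $L^1(0,t)$ dominating function, so dominated convergence sends $\int_0^t\a_\e(\rho_\e(r),\sigma_\e)\,\d r$ to $\int_0^t\a(\rho(r),\sigma)\,\d r$. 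The initial term needs only the bound on $\b_\e(\rho_\e^0)$: the polarization identity for $\b$, combined with the weak-$*$ lower semicontinuity of $\b_\e$ (the $\b$-part of the $\Gamma$-liminf, which requires no control of $\a_\e$), yields $\b_\e(\rho_\e^0,\sigma_\e)\to\b(\rho^0,\sigma)$. Hence $\rho(\cdot)$ solves~\eqref{eq:25_0}. By~\textbf{1)} this limit problem is well posed --- non-degeneracy of $\b$ and density of $\Domain(\a)$ make $(\a,\b)$ a Dirichlet-type form generating a unique solution with the smoothing $\rho\in C^\infty((0,\infty);\Domain(\a))$ --- so uniqueness makes the limit independent of the subsequence and the whole family converges, proving the first assertion.

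For the second assertion assume $\b_\e(\rho_\e^0)\to\b(\rho^0)$. Comparing the energy identity~\eqref{eq:26} for the $\e$-problem with its limit counterpart, and using~\eqref{eq:35} with Fatou's lemma,
\[
\tfrac12\b(\rho^0)=\lim_\e\Big(\tfrac12\b_\e(\rho_\e(t))+\int_0^t\a_\e(\rho_\e(r))\,\d r\Big)\ge\tfrac12\b(\rho(t))+\int_0^t\a(\rho(r))\,\d r=\tfrac12\b(\rho^0),
\]
so every inequality is an equality and both $\b_\e(\rho_\e(t))\to\b(\rho(t))$ and $\int_0^t\a_\e(\rho_\e(r))\,\d r\to\int_0^t\a(\rho(r))\,\d r$ hold for all $t>0$. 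Writing the same identity on $[s,t]$ gives $\int_s^t\a_\e(\rho_\e(r))\,\d r\to\int_s^t\a(\rho(r))\,\d r$ for $0<s<t$; since $r\mapsto\a_\e(\rho_\e(r))$ is non-increasing (the regularization property recalled in Section~\ref{sec:variational}) and the limit $r\mapsto\a(\rho(r))$ is continuous, averaging over $[s,t]$ and letting $s\uparrow t$ gives $\limsup_\e\a_\e(\rho_\e(t))\le\a(\rho(t))$, which together with the $\Gamma$-liminf yields $\a_\e(\rho_\e(t))\to\a(\rho(t))$ for every $t>0$, i.e.~\eqref{eq:3}.

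I expect the main obstacle to lie where time and the singular limit interact. First, one must produce a \emph{single} subsequence along which $\rho_\e(r)\weakto\rho(r)$ for all $r>0$ so that the time integral can be passed to the limit, and simultaneously dominate the integrand near $r=0$, where $\a_\e(\rho_\e(r))$ blows up like $1/r$; the equicontinuity estimate and the integrable $r^{-1/2}$ bound are precisely what make this work. Second, and more subtle because the data are only weakly (not well-)prepared, is the initial term $\b_\e(\rho_\e^0,\sigma_\e)$: Corollary~\ref{cor:Gamma} cannot be applied directly, since $\a_\e(\rho_\e^0)$ may be unbounded, and one must instead exploit the separate weak-$*$ lower semicontinuity of $\b_\e$. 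Finally, upgrading the integral convergence in Step~3 to the pointwise statement for \emph{every} $t>0$ (rather than almost every $t$) hinges on the monotonicity of $\a_\e(\rho_\e(\cdot))$ and the continuity of the limit energy.
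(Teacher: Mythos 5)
Your proof is correct and follows the same overall route as the paper's: a priori bounds from \eqref{eq:26}--\eqref{eq:27} together with the coercivity of Lemma \ref{le:coercivity}, Arzel\`a--Ascoli compactness in $\M(\cDom)$, passage to the limit in the integrated equation \eqref{eq:25} with a joint recovery family, Corollary \ref{cor:Gamma}, and the integrable $O(r^{-1/2})$ domination, identification of the limit via hypothesis \textbf{1)}, and, for the second statement, comparison of the energy identities through Fatou's lemma. Your two local deviations concern exactly the two points the paper treats most tersely, and they are worth comparing. First, for the initial term $\b_\e(\rho_\e^0,\sigma_\e)$ the paper simply invokes Corollary \ref{cor:Gamma}, whose hypothesis \eqref{eq:7} is in fact not available for $\rho_\e^0$ (only $\b_\e(\rho_\e^0)$ is bounded, not $\a_\e(\rho_\e^0)$); your polarization argument based on the separate lower semicontinuity of $\b_\e$ is the natural repair, but note that this separate $\b$-liminf does \emph{not} follow from hypothesis \textbf{3)} through Lemma \ref{lemma:equivalence} (which requires the bound \eqref{eq:8}); it is precisely the ``slightly stronger'' property, mentioned in Section \ref{sec:structure}, that Theorem \ref{thm:Gamma} supplies unconditionally in the concrete setting. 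So your fix and the paper's citation stand or fall together: both implicitly use this stronger form of the $\Gamma$-liminf, and both are valid in the application to Theorems \ref{th:1} and \ref{th:2}. Second, to upgrade \eqref{eq:47} to the pointwise convergence $\a_\e(\rho_\e(t))\to\a(\rho(t))$, the paper applies its Fatou-type argument to the identity \eqref{id:a-intb} and its ``$\e=0$'' analogue (which tacitly requires weak-$*$ convergence of $\partial_t\rho_\e(r)$), whereas you use the monotonicity of $r\mapsto\a_\e(\rho_\e(r))$, the convergence of $\int_s^t\a_\e(\rho_\e(r))\,\d r$, and an averaging argument; this is a genuine alternative, arguably cleaner, at the price of invoking the (left-)continuity of $r\mapsto\a(\rho(r))$, which follows from the standard regularization theory of the variational evolution generated by $(\a,\b)$ under hypothesis \textbf{1)}.
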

  
\begin{proof}
Let us first note that by \eqref{eq:26} and the coercivity
  property of $\b_\e$ 
the mass of $\rho_\e(t)$
is bounded uniformly in $t$. Moreover, \eqref{eq:27} and the
  coercivity property show that
$\partial_t\rho_\e$ is a finite measure whose total
mass is uniformly bounded in each bounded interval
$[t_0,t_1]\subset (0,+\infty)$. By the Arzela-Ascoli theorem we can extract
a subsequence $\rho_{\e_n}$ such that $\rho_{\e_n}(t)\weakto \rho(t)$
for every $t\ge0$. The estimates~\eqref{eq:27} and \eqref{eq:35} show that
for every $t>0$, $\rho(t)$ belongs to the domain of the quadratic forms
$\a$ and $\b$, and satisfies a similar estimate
\begin{equation}
  \label{eq:45bis}
  \frac12\b(\rho(t))+t\,\a(\rho(t))+t^2\b(\partial_t \rho(t))\le
  \frac12\liminf_{\e\downarrow0}\b(\rho^0_\e)<+\infty.
\end{equation}

Let $\sigma\in \M(\cDom)$ be an arbitrary element of the domains of
$\a$ and $\b$; by \eqref{eq:6} we can find
a family $\sigma_\e$ (actually a family $\sigma_{\e_n}$, but we suppress the subscript $n$) weakly converging to $\sigma$ such that
\eqref{eq:36} holds.
By \eqref{eq:25} we have
\begin{equation}
  \label{eq:46}
  \b_\e(\rho_\e(t),\sigma_\e)+\int_0^t
  \a_\e(\rho_\e(r),\sigma_\e)\,\d r=\b_\e(\rho_\e^0,\sigma_\e)
\end{equation}
and \eqref{eq:27} with the Schwarz inequality yields the uniform bound
\begin{displaymath}
  \big|\a_\e(\rho_\e(t),\sigma_\e)\big|\le
  t^{-1/2}\b_\e(\rho_\e^0)^{1/2}\a_\e(\sigma_\e)^{1/2}\le Ct^{-1/2}
\end{displaymath}
where $C$ is independent of $\e$;
we can therefore pass to the limit in \eqref{eq:46} by Corollary \ref{cor:Gamma} to find
\begin{displaymath}
  \b(\rho(t),\sigma)+\int_0^t
  \a_\e(\rho(r),\sigma)\,\d r=\b(\rho_0^0,\sigma),
\end{displaymath}
so that $\rho$ is a solution of the limit equation. Since the limit is
uniquely identified by the non-degeneracy and density condition 1), we conclude that the whole family $\rho_\e$
converges to $\rho$ as $\e\downarrow0$.
In particular $\rho$ satisfies the identity
\begin{equation}
\label{id:limit-dissipation}
  \frac 12\b(\rho(t))+\int_0^t\a(\rho(r))\,\d r=\frac
  12\b(\rho^0)\quad\text{for every }t\ge0.
\end{equation}
This concludes the proof of \eqref{eq:2} (and of Theorem \ref{th:1}).

In order to prove \eqref{eq:3} (and Theorem \ref{th:2}) we note that by
  \eqref{eq:26} and \eqref{id:limit-dissipation} we easily get
\begin{displaymath}
  \limsup_{\e\downarrow0}\frac 12\b_\e(\rho_\e(t))+\int_0^t\a_\e(\rho_\e(r))\,\d r
  \le
  \frac 12\b(\rho(t))+\int_0^t\a(\rho(r))\,\d r.
\end{displaymath}
The lower-semicontinuity property
\eqref{eq:35} and Fatou's Lemma yield 
\begin{equation}
  \label{eq:47}
  \lim_{\e\downarrow0}\b_\e(\rho_\e(t))= \b(\rho(t)),\quad
  \lim_{\e\downarrow0}\int_0^t\a_\e(\rho_\e(r))\,\d r
  =
  \int_0^t\a(\rho(r))\,\d r\quad\text{for
    every }t\ge0.
\end{equation}
Applying the same argument to~\eqref{id:a-intb} and its ``$\e=0$'' analogue 
we conclude that $\a_\e(\rho_\e(t))\to \a(\rho(t))$ for every $t>0$.
\end{proof}
  \begin{remark}[More general ambient spaces]
    The particular structure of $\M(\cDom)$ did not play any role in
    the previous argument, so that the validity of the above result can be easily extended to
    general topological vector spaces (e.g.~dual of separable Banach
    spaces with their weak-$*$ topology),
    once the coercivity condition
    of Lemma \ref{le:coercivity} is satisfied.
  \end{remark}
\small
\bibliography{ref}
\bibliographystyle{abbrv}

\end{document}